\newtheorem{theorem}{Theorem}[section]
\newtheorem{proposition}[theorem]{Proposition}
\newtheorem{lemma}[theorem]{Lemma}
\newtheorem{cor}[theorem]{Corollary}
\newtheorem{conjecture}[theorem]{Conjecture}
\theoremstyle{definition}
\newtheorem{definition}[theorem]{Definition}
\newtheorem{example}[theorem]{Example}
\numberwithin{equation}{section}
\newcommand\nn{\mathbb{N}}
\newcommand\pp{\mathbb{P}}
\newcommand\qq{\mathbb{Q}}
\newcommand\rr{\mathbb{R}}
\newcommand\zz{\mathbb{Z}}
\newcommand\mm{\mathcal{M}}
\providecommand\ldb{\llbracket}
\providecommand\rdb{\rrbracket}
\newcommand{\twopf}[4]
{
	\left\{
	\begin{array}{ll}
		#1 & \mbox{if } #2 \\
		#3 & \mbox{if } #4
	\end{array}
	\right.
}
\keywords{Puiseux monoid, numerical monoid, atomic monoid, atomicity, factorization, molecule, atom, BFM, FFM, UFM}
\subjclass[2010]{Primary: 20M13; Secondary: 06F05, 20M14}
\begin{document}
	
	\mbox{}
	\title{On the set of molecules \\ of numerical and Puiseux monoids}
	
	\author{Marly Gotti}
	\address{Department of Research and Development \\ Biogen \\ Cambridge \\ MA 02142 \\ USA}
	\email{marly.cormar@biogen.com}
	
	\author{Marcos M. Tirador}
	\address{Facultad de Matem\'atica y Computaci\'on \\ Universidad de La Habana \\ San L\'azaro y~L, Vedado \\ Habana 4 \\ CP-10400 \\ Cuba}
	\email{marcosmath44@gmail.com}
	
	\date{\today}
	
	\begin{abstract}
		Additive submonoids of $\qq_{\ge 0}$, also known as Puiseux monoids, are not unique factorization monoids (UFMs) in general. Indeed, the only unique factorization Puiseux monoids are those generated by one element. However, even if a Puiseux monoid is not a UFM, it may contain nonzero elements having exactly one factorization. We call such elements molecules. Molecules were first investigated by W. Narkiewicz in the context of algebraic number theory. More recently, F. Gotti and the first author studied molecules in the context of Puiseux monoids. Here we address some aspects related to the size of the sets of molecules of various subclasses of Puiseux monoids with different atomic behaviors. In particular, we positively answer the following recent realization conjecture: for each $m \in \nn_{\ge 2}$ there exists a numerical monoid whose set of molecules that are not atoms has cardinality $m$.
	\end{abstract}
	\medskip
	
	\maketitle
	
	\medskip
	
	
	\section{Introduction}
	\label{sec:intro}
	
	Let $M$ be a cancellative and commutative monoid. A factorization of a non-invertible element $x \in M$ is a formal product $a_1 \cdots a_\ell$ of atoms (i.e., irreducible elements), up to permutations and associates, such that $x = a_1 \cdots a_\ell$ in $M$; in this case, $\ell$ is called the length of the factorization. Following P. M. Cohn, we call $M$ atomic if every non-invertible element of $M$ has a factorization. In addition, $M$ is called  a unique factorization monoid (or a UFM) if every non-invertible element of $M$ has a unique factorization. Although each UFM is clearly atomic, an element of an atomic monoid may have more than one factorization (even infinitely many). For instance, this is the case of the element $6$ in the multiplicative monoid of the ring of algebraic integers $\zz[\sqrt{-5}]$; notice that 
	\[
	6 = 2 \cdot 3 =  (1 - \sqrt{-5})(1 + \sqrt{-5}).
	\]
	A didactic exposition of the factorization-theoretical aspects of $\zz[\sqrt{-5}]$ can be found in~\cite{CGG19}. Following \cite{GG20}, we say that a non-invertible element $x \in M$ is a molecule if $x$ has exactly one factorization in $M$, and we let $\mm(M)$ denote the set consisting of all molecules of $M$. 
	
	\smallskip
	Perhaps, the first systematic study of molecules was carried out by W. Narkiewicz back in the 1960s in multiplicative monoids of rings of algebraic integers of quadratic number fields  \cite{wN66,wN66a} and later in rings of integers of general number fields~\cite{wN72}. This is hardly a surprise given that factorization theory has its origin in algebraic number theory, one of the pioneering works being~\cite{lC60}. More recently, the molecules of additive monoids such as numerical monoids and some generalizations of them have been studied in~\cite[Sections~3--4]{GG20}.
	
	\smallskip
	Let $\mathcal{A}(M)$ denote the set of atoms of a cancellative and commutative monoid $M$. Clearly, $\mathcal{A}(M)$ is contained in $\mathcal{M}(M)$. In this paper, we study the sizes of the sets of molecules that are not atoms in additive submonoids of $\qq_{\ge 0}$. One of the initial motivations of this project was the following realizability question posed by F. Gotti and the first author  in~\cite{GG20}.
	\begin{conjecture} \label{conj:NM molecule conjecture}
		For every $n \in \nn_{\ge 2}$ there exists a numerical monoid $N$ such that $|\mathcal{M}(N) \setminus \mathcal{A}(N)| = n$.
	\end{conjecture}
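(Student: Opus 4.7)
The plan is to prove the conjecture by constructing, for each $m \ge 2$, an explicit numerical monoid $N$ with $|\mathcal{M}(N) \setminus \mathcal{A}(N)| = m$, assembled from a small collection of parametric families whose combined molecule counts exhaust $\nn_{\ge 2}$.

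The main family I would analyze is the three-generator interval $\langle n, n+1, n+2 \rangle$ for $n \ge 3$. Its minimal presentation contains the primitive Betti relation $n + (n+2) = 2(n+1)$, so factorizations of a fixed element $x = an + b(n+1) + c(n+2)$ trace out a sequence of shifts along the vector $(-1, 2, -1) \in \zz^3$, with a second primitive Betti relation becoming available only for sufficiently large elements. A careful classification shows that the non-atom molecules are precisely the elements of the form $kn$, $kn+1$, $k(n+2)-1$, and $k(n+2)$, for $k$ ranging in an explicit interval around $[2, n/2]$ (the upper bound being dictated both by the uniqueness of the level $k = a + b + c$ and by the requirement that the secondary shift fall outside $\nn^3$). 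Splitting the count by the parity of $n$ yields exactly $2(n-2)$ non-atom molecules, so this family realizes every even $m \ge 2$.

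For odd $m$ I would combine two complementary strategies. First, the two-generator family $\langle p, q \rangle$ with $\gcd(p, q) = 1$ has non-atom molecules in bijection with $\bigl([0, q-1] \times [0, p-1]\bigr) \setminus \{(0, 0), (1, 0), (0, 1)\}$, for a total of $pq - 3$; choosing $\langle 2, 2k+1 \rangle$ already realizes every odd $m$ of the form $4k - 1$, and more general coprime pairs fill in additional values. Second, for odd $m$ such that $m + 3$ admits no coprime factorization with both factors $\ge 2$ (e.g.\ $m = 5$ or $m = 13$, where $m+3$ is a prime power), I would invoke three-generator families such as $\langle n, n+1, n+3 \rangle$ (for which I expect the count to be $3n - 7$) and $\langle n, n+2, n+3 \rangle$ (whose count depends on the parity of $n$), supplemented by small ad hoc examples such as $\langle 4, 5, 7 \rangle$ for $m = 5$ and $\langle 6, 8, 9 \rangle$ for $m = 13$. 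A bounded case analysis based on the residue of $m$ modulo a small integer then covers every odd $m \ge 3$.

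The main obstacle will be the precise molecule-count step for each family. For every candidate $N$ one must analyze the kernel lattice of the factorization map $\nn^k \to \nn$, identify its primitive Betti relations, and verify that no such relation produces an auxiliary factorization of any candidate unique-factorization element. This is classical for $\langle p, q\rangle$ via the Apery set, and tractable for $\langle n, n+1, n+2 \rangle$ because one primary relation governs the combinatorics while the secondary relation is easily checked to remain inactive on the four candidate families. The delicate part is whatever three-generator family one invokes to clean up the remaining odd values, where multiple primitive Betti relations interact and a level-by-level combinatorial enumeration is required.
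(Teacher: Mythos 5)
Your strategy coincides with the paper's for even $m$: both use the three-generator interval monoid $\langle a, a+1, a+2 \rangle$, for which the paper proves (Corollary~\ref{cor:molecules of interval NMs} at $n=2$) that the count of non-atom molecules is $2a-4$, matching your claimed $2(a-2)$.

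For odd $m$, however, your plan has a genuine gap. Your two-generator count $|\mathcal{M}(\langle p,q\rangle)\setminus\mathcal{A}(\langle p,q\rangle)|=pq-3$ is correct (it follows from the Ap\'ery-set description of molecules), but it realizes an odd $m$ only when $m+3$ admits a coprime factorization into two factors $\ge 2$, which fails precisely when $m+3$ is a power of $2$. Your family $\langle n,n+1,n+3\rangle$ with claimed count $3n-7$ (which I believe is correct --- it checks at $n=4,5,6$) covers $m\equiv 2\pmod 3$, hence $m=2^{a}-3$ for odd $a$; but the remaining sparse sequence $m=2^{2k}-3\in\{13,61,253,1021,\dots\}$, all $\equiv 1\pmod 3$, is hit by neither of these. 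These cases are not characterized by a fixed residue class --- they are characterized by $m+3$ being a power of $2$ --- so your proposed ``bounded case analysis based on the residue of $m$ modulo a small integer'' cannot close the argument as stated; you would need a fully analyzed unbounded family covering them. You offer $\langle n,n+2,n+3\rangle$ plus ad hoc examples, but you give no formula for this family, and your ad hoc example for $m=13$ is in fact wrong: a direct enumeration of $\langle 6,8,9\rangle$ yields fourteen non-atom molecules, namely $12,14,15,16,17,20,21,22,23,25,28,29,31,37$, not thirteen. The paper avoids all of this by using a single four-generator family $N_a=\langle a,a+1,a+3,a+4\rangle$ ($a\ge 6$), for which it computes $|\mathcal{M}(N_a)\setminus\mathcal{A}(N_a)|=2a-2\lceil a/4\rceil-3$; plugging in $a=4k,\,4k-1,\,4k-2$ gives $6k-3,\,6k-5,\,6k-7$, which together sweep out every odd $m\ge 5$, with $\langle 2,3\rangle$ handling $m=3$. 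To repair your argument you would need either to identify and verify an unbounded family realizing the values $2^{2k}-3$, or to adopt a single family like the paper's whose count is an explicit piecewise-linear function of the parameter exhausting all odd values.
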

	\noindent With the statement of Conjecture~\ref{conj:NM molecule conjecture} in mind, we say that a class $\mathcal{C}$ of cancellative and commutative monoids is molecular if for every $n \in \nn_{\ge 2}$ there exists a monoid $M$ in $\mathcal{C}$ such that $|\mathcal{M}(M) \setminus \mathcal{A}(M)| = n$. Clearly, a molecular class must contain infinitely many non-isomorphic monoids. In the first part of this paper, we provide a positive answer to Conjecture~\ref{conj:NM molecule conjecture}, i.e., we prove that the class consisting of all numerical monoids is molecular.
	
	\smallskip
	Following D. D. Anderson, D. F. Anderson, and M. Zafrullah~\cite{AAZ90}, we say that an atomic monoid $M$ is a finite factorization monoid (or an FFM) if every element of $M$ has only finitely many factorizations, and we say that $M$ is a bounded factorization monoid (or a BFM) if for every element in $M$ there is a bound for the set of lengths of its factorizations. It is clear that
	\begin{equation} \label{diag:Anderson diagram fragment}
		UFM \Longrightarrow \ FFM \ \Longrightarrow \ BFM \ \Longrightarrow \ ATM,
	\end{equation}
	where ATM stands for atomic monoid. The chain of implications~\eqref{diag:Anderson diagram fragment} is a fragment of a larger diagram of atomic classes that first appeared in~\cite{AAZ90}, where it was illustrated that none of the implications in~\eqref{diag:Anderson diagram fragment} is reversible in the class consisting of integral domains. The original larger diagram of atomic classes of integral domains was further investigated in the sequel~\cite{AAZ92,AeA99,AA10}. It was recently proved in \cite[Theorem~4.3]{fG20a} that none of the implications in~\eqref{diag:Anderson diagram fragment} is reversible in the class consisting of semigroup rings $F[X;S]$, where $F$ is a field and $M$ is an additive monoid consisting of rationals.
	
	\smallskip
	A Puiseux monoid is an additive submonoid of $\qq_{\ge 0}$. It is well know that none of the implications in~\eqref{diag:Anderson diagram fragment} is reversible in the class of Puiseux monoids (see examples in Subsection~\ref{subsec:atomic classes of PMs}). Unlike numerical monoids, Puiseux monoids are not, in general, atomic. The second part of this paper is devoted to construct, for each implication in \eqref{diag:Anderson diagram fragment}, a molecular subclass of Puiseux monoids whose members witness the failure of the corresponding reverse implication. For instance, in Theorem~\ref{thm:the class C_2 is molecular} we construct a subclass of Puiseux monoids that is molecular, whose monoids are BFMs but not FFMs. We also construct a molecular class consisting of non-atomic Puiseux monoids.
	
	\medskip
	
	
	\section{Preliminary}
	\label{sec:Background and Notation}
	
	We let $\mathbb{N}$ and $\nn_0 := \nn \cup \{0\}$ denote the set of positive and nonnegative integers, respectively, and we let $\pp$ denote the set of primes. In addition, for $X \subseteq \rr$ and $r \in \rr$, we set $X_{\ge r} := \{x \in X \mid x \ge r\}$; in a similar way, we use the notations $X_{> r}$, $X_{\le r}$, and $X_{< r}$. Given $a, b \in \zz$, we refer by $\ldb a, b \rdb$ to the set $[a, b] \cap \zz$, i.e, the set of integers between $a$ and $b$. For each $q \in \qq_{> 0}$, the unique $n,d \in \nn$ such that $q = n/d$ and $\gcd(n,d) = 1$ are denoted by $\mathsf{n}(q)$ and $\mathsf{d}(q)$, respectively. We call $\mathsf{n}(q)$ and $\mathsf{d}(q)$ the \emph{numerator} and \emph{denominator} of $q$, respectively.
	\smallskip
	
	Throughout this paper, the term \emph{monoid} refers to a cancellative and commutative semigroup with identity. Since all monoids here are assumed to be commutative, we shall write them additively unless otherwise is specified. In addition, we shall tacitly assume that all monoids in this paper are reduced, in the sense that the only invertible element they contain is the identity element.
	\smallskip
	
	Let $M$ be a monoid. For a subset $S$ of $M$, we let $\langle S \rangle$ denote the smallest (under inclusion) submonoid of $M$ containing $S$. We say that $M$ is generated by $S$ if $M = \langle S \rangle$. In addition, $M$ is called \emph{finitely generated} if it can be generated by one of its finite subsets. An element $a \in M \setminus \{0\}$ is called an \emph{atom} if whenever $a = u + v$ for some $u,v \in M$ either $u = 0$ or $v = 0$. The set of atoms of $M$ is denoted by $\mathcal{A}(M)$, and $M$ is called \emph{atomic} if $M = \langle \mathcal{A}(M) \rangle$.
	\smallskip
	
	The free commutative monoid on $\mathcal{A}(M)$ is denoted by $\mathsf{Z}(M)$, and the elements of $\mathsf{Z}(M)$ are called \emph{factorizations} of $M$. If $z := a_1 + \dots + a_\ell \in \mathsf{Z}(M)$ for some $a_1, \dots, a_\ell \in \mathcal{A}(M)$, then $\ell$ is called the \emph{length} of the factorization $z$ and is denoted by~$|z|$. Since $\mathsf{Z}(M)$ is free, there exists a unique monoid homomorphism $\pi \colon \mathsf{Z}(M) \to M$ satisfying that $\pi(a) = a$ for all $a \in \mathcal{A}(M)$. For $x \in M$, the set $\mathsf{Z}(x) := \pi^{-1}(x)$ is called the \emph{set of factorizations} of $x$. Since $M$ need not be atomic, $\mathsf{Z}(x)$ may be empty for some $x \in M$.
	
	\begin{definition}
		Let $M$ be a monoid. An element $x \in M \setminus \{0\}$ is a \emph{molecule} if $|\mathsf{Z}(x)| = 1$. We let $\mathcal{M}(M)$ denote the set consisting of all molecules of $M$.
	\end{definition}
	
	For each $x \in M$, the set $\mathsf{L}(x) := \{|z| \mid z \in \mathsf{Z}(x)\}$ is called the \emph{set of lengths} of~$x$. Clearly, the set of lengths of a molecule is a singleton. Suppose now that $M$ is atomic. We say that $M$ is a \emph{UFM} (or a \emph{unique factorization monoid}) if every non-invertible element of $M$ is a molecule. In addition, $M$ is called an \emph{FFM} (or a \emph{finite factorization monoid}) if $\mathsf{Z}(x)$ is finite for all $x \in M$ while $M$ is called a \emph{BFM} (or a \emph{bounded factorization monoid}) if $\mathsf{L}(x)$ is finite for all $x \in M$. It is clear that every UFM is an FFM, every FFM is a BFM, and every BFM is atomic.
	\smallskip
	
	A submonoid $N$ of $(\nn_0,+)$ is said to be a \emph{numerical monoid}\footnote{Numerical monoids have been widely investigated under the term numerical semigroups.} if $\nn_0 \setminus N$ is a finite set. If $\nn_0 \setminus N$ is not empty, then $N$ is said to be a \emph{proper} numerical monoid; in this case, the maximum of $\nn_0 \setminus N$ is known as the \emph{Frobenius number} of $N$ and is denoted by $F(N)$. It is not hard to verify that a numerical monoid is always finitely generated and has a unique minimal set of generators, which is precisely its set of atoms. The \emph{embedding dimension} of $N$ is the cardinality of its generating set. As numerical monoids are finitely generated, they are FFMs \cite[Proposition~2.7.8]{GH06}, and so BFMs. Numerical monoids have been actively investigated (see~\cite{GR09} and references therein) and have many connections to several areas of mathematics (see~\cite{AG16} for some applications).
	\smallskip
	
	On the other hand, a submonoid $M$ of $(\qq_{\ge 0},+)$ is called a \emph{Puiseux monoid}. Unlike numerical monoids, Puiseux monoids may not be finitely generated or atomic: for instance, $M = \langle 1/2^n \mid n \in \nn_0 \rangle$ is clearly non-finitely generated and $\mathcal{A}(M)$ is empty. Further contrasts with numerical monoids are given by the existence of atomic Puiseux monoids that are not BFMs as it is the case of $\langle 1/p \mid p \in \pp \rangle$ (see Example~\ref{ex:atomic not BFM}) and the existence of Puiseux monoids that are BFMs but not FFMs as it is the case of $\nn_0 \cup \qq_{\ge n}$, where $n$ is a positive integer (see Example~\ref{ex:BFMs not FFMs}). Puiseux monoids have only been systematically studied recently in connection to factorization theory (see~\cite{CGG20,CGG20a} and references therein). In addition, Puiseux monoids have appeared in the literature in connection to commutative ring theory (see~\cite{CG19,aG74}) and, more recently, in the non-commutative context of monoids of matrices~\cite{BG20}.
	
	\medskip
	
	
	\section{Molecules of Interval Numerical Monoids}
	\label{sec:molecules in NM}
	
	In this first section we describe the set of molecules of numerical monoids generated by discrete intervals. To begin with, let us provide a formal definition.
	
	\begin{definition}
		We call a numerical monoid $N$ an \emph{interval numerical monoid} provided that $\mathcal{A}(N)$ consists of consecutive integers. For $a \in \nn$ and $n \in \ldb 0, a-1 \rdb$, we let $N_{a,n}$ denote the interval numerical monoid generated by the set $\{a+j \mid j \in \ldb 0,n \rdb \}$.
	\end{definition}
	
	Interval numerical monoids were first investigated by Garc\'ia-S\'anchez and Rosales in~\cite{GR99} where, among other results, they found a formula for the Frobenius number. They proved that for $a \in \nn$ and $n \in \ldb 0, a-1 \rdb$, the Frobenius number of the interval numerical monoid $N_{a,n}$ is $F(N_{a,n}) = \big\lceil \frac{a-1}{n} \big\rceil a - 1$.
	
	It follows immediately that $\mathcal{A}(N_{a,n}) = \{a+j \mid j \in \ldb 0,n \rdb\}$. However, the set of molecules of $N_{a,n}$ is not that easy to determine, and providing a full description of $\mathcal{M}(N_{a,n})$ is our primary purpose in this section.
	
	\begin{example}
		For every $a \in \nn_{\ge 2}$, the embedding-dimension-two numerical monoid $N_{a,1} = \langle a,a+1 \rangle$ is an interval numerical monoid. Clearly, $\mathcal{A}(N_{a,1}) = \{a,a+1\}$, and it is not hard to verify that
		\[
		\mathcal{M}(N_{a,1}) = \{ (m+n)a + n \mid m \in \ldb 0,a \rdb, \, n \in \ldb 0, a-1 \rdb, \, \text{and} \ (m, n) \neq  (0,0) \};
		\]
		for more information, see~\cite[Proof of Theorem~3.7]{GG20}.
	\end{example}
	
	In the next theorem we determine the set of molecules of $N_{a,n}$ for $a \in \nn_{\ge 3}$ and $n \in \ldb 1, a-1 \rdb$. First, we argue the following lemma.
	
	\begin{lemma} \label{lem:formulas for n_1 and n_2}
		Let $a, n \in \nn$ such that $a \ge 3$ and $n \in \ldb 1, a-1 \rdb$, and let $n_1$ and $n_2$ be the smallest positive integers such that $n_1 a \in \langle a + j \mid j \in \ldb 1,n \rdb \rangle$ and $n_2 (a+n) \in \langle a + j \mid j \in \ldb 0,n-1 \rdb \rangle$. Then
		\[
		n_1 = n_2 + 1 = \left\lceil \frac an \right\rceil + 1.
		\]
	\end{lemma}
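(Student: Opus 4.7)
\medskip

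The plan is to handle each of $n_1$ and $n_2$ by parameterizing an arbitrary expression as a nonnegative integer combination of the relevant atoms, reducing to a simple optimization over an auxiliary integer $m \ge 1$.

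For $n_1$, suppose $n_1 a = \sum_{j=1}^{n} c_j(a+j)$ with $c_j \in \nn_0$ not all zero, and set $k := \sum_{j=1}^{n} c_j$ and $m := n_1 - k$. Comparing the two sides yields
\[
m a = \sum_{j=1}^{n} j \, c_j,
\]
so $m \ge 1$ (otherwise all $c_j$ vanish). For fixed $m \ge 1$, the minimum value of $k$ subject to $\sum_{j=1}^{n} j c_j = m a$ and $c_j \in \nn_0$ is $\lceil m a / n \rceil$, since each unit of $k$ contributes at most $n$ to the left-hand sum, and the bound is realized by choosing $c_n$ as large as possible and padding with a single $c_r$. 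Hence $n_1 = m + \lceil m a /n \rceil$ for some $m \ge 1$, and since this expression is strictly increasing in $m$ (as $\lceil m a/n \rceil$ is nondecreasing in $m$), the minimum $m = 1$ gives $n_1 = \lceil a/n \rceil + 1$.

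For $n_2$, the key maneuver is the change of variables $a + j = (a+n) - (n-j)$, which turns the generators $a, a+1, \dots, a+n-1$ into $(a+n)$ minus the ``weights'' $n, n-1, \dots, 1$. Writing $n_2(a+n) = \sum_{j=0}^{n-1} c_j(a+j)$ and substituting gives
\[
\Bigl(\sum_{j=0}^{n-1} c_j - n_2\Bigr)(a+n) = \sum_{j=0}^{n-1} c_j (n-j).
\]
Setting $k := \sum c_j$ and $m := k - n_2$, the right side is positive (the $c_j$'s are not all zero), so $m \ge 1$, and I must choose $c_j \in \nn_0$ with $\sum c_j(n-j) = m(a+n)$. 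The same minimization argument as above, now with weights $n-j \in \ldb 1, n \rdb$, shows that the smallest available $k$ for fixed $m$ is $\lceil m(a+n)/n \rceil = m + \lceil ma/n \rceil$, so $n_2 = \lceil ma/n \rceil$; this is minimized at $m = 1$, yielding $n_2 = \lceil a/n \rceil$.

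Combining the two computations immediately gives $n_1 = \lceil a/n \rceil + 1 = n_2 + 1$, as claimed. There is no real obstacle here: the heart of the argument is the symmetric reformulation for $n_2$ via the reflection $j \mapsto n-j$, after which both halves reduce to the same elementary fact that representing a positive integer as a nonnegative combination of $\{1,\dots,n\}$ requires at least $\lceil \cdot /n \rceil$ summands, with equality achieved greedily.
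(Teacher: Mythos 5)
Your proof is correct, and it takes a genuinely different route from the paper's. The paper argues the lower bound $n_2 \ge \lceil a/n \rceil$ via a monoid-theoretic observation: any representation of $n_2(a+n)$ in $M := \langle a+j \mid j \in \ldb 0, n-1 \rdb\rangle$ must have length at least $n_2+1$, and the smallest element of $M$ admitting such a long factorization is $(n_2+1)a$, which forces $(n_2+1)a \le n_2(a+n)$ and hence $a \le n_2 n$. For the upper bound it checks directly that $\lceil a/n \rceil(a+n)$ lies in the discrete interval $\ldb (\lceil a/n \rceil+1)a, (\lceil a/n \rceil+1)(a+n-1) \rdb$, which is contained in $M$. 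Your approach instead subtracts off the base $a$ (resp.\ $a+n$, via the reflection $j \mapsto n-j$) to reduce both halves to the same elementary combinatorial fact: representing a positive integer as a sum of elements of $\ldb 1, n \rdb$ needs at least $\lceil \cdot / n \rceil$ summands, achieved greedily. This gives a clean parameterization by an auxiliary integer $m \ge 1$ and a one-line monotonicity argument. The change of variables for $n_2$ is the nice idea here: it exposes the symmetry between the two quantities that the paper only gestures at by saying the $n_1$ case is ``similar.'' Both proofs are about the same length and difficulty; yours is slightly more self-contained and makes the duality explicit, while the paper's leans on a reusable structural fact (smallest element with a factorization of a given length) that recurs implicitly in its Theorem 3.4 proof.

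One small point of exposition you should tighten: when you write ``so $n_2 = \lceil ma/n \rceil$'' for fixed $m$, what you have actually shown is that the smallest achievable value of $k-m$ for that fixed $m$ is $\lceil ma/n \rceil$, and that this is attained (by the greedy choice of $c_j$). The global $n_2$ is then the minimum of these over $m \ge 1$. The logic is right, but the intermediate sentence as written conflates ``the optimum for fixed $m$'' with ``$n_2$.''
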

	
	\begin{proof}
		Set $M = \langle  a+j\mid j\in  \ldb 0,n-1 \rdb \rangle$. If $x := n_{2}(a+n)\in M$, then $x$ has a factorization in $M$ with length greater than $n_2$. Now the inequality $(n_{2}+1)a \leq n_{2}(a+n)$ follows from the fact that $(n_2 + 1)a$ is the smallest element of $M$ that has a factorization of length greater than $n_{2}$. Therefore $a \le n_2 n$, which yields $n_2 \ge \lceil a/n \rceil$. It remains to prove that $\lceil a/n \rceil(a+n) \in M$. From $\ldb a, a+n-1 \rdb \subseteq M$, one can easily see that 
		\[
		\ldb (\lceil a/n \rceil + 1)a, (\lceil a/n \rceil + 1)(a + n - 1) \rdb \subseteq M.
		\] 
		Now since $a/n \le \lceil a/n \rceil$ and $\lceil a/n \rceil + 1 \le a + n$, we have that 
		\[
		(\lceil a/n \rceil + 1) a \le \lceil a/n \rceil (a+n) \le (\lceil a/n \rceil + 1)(a + n - 1). 
		\]
		Thus, $\lceil a/n \rceil(a+n) \in M$.
		We can prove that $n_1 = \lceil a/n \rceil + 1$ using similar arguments.
	\end{proof}
	
	We are now in a position to establish the main result of this section.
	
	\begin{theorem} \label{thm:molecules of generalized INM}
		Let $a, n \in \nn$ such that $a \ge 3$ and $n \in \ldb 1, a-1 \rdb$, and let $n_1$ and $n_2$ be the smallest positive integers such that $n_1 a \in \langle a + j \mid j \in \ldb 1,n \rdb \rangle$ and $n_2 (a+n) \in \langle a + j \mid j \in \ldb 0,n-1 \rdb \rangle$.  Then $\mathcal{M}(N_{a,n}) \setminus \mathcal{A}(N_{a,n}) = \mm_1 \cup \mm_2 \cup \mm'_2 \cup  \mm_3 \cup \mm_4 \cup \mm'_4$, where
		\begin{itemize}
			\item $\mm_1 = \{ja \mid j \in \ldb 2, n_1 - 1 \rdb \}$,
			\smallskip
			
			\item $\mm_2 = \{ja + (a+1) \mid j \in \ldb 1, n_1 - 2 \rdb\}$,
			\smallskip
			
			\item $\mm'_2 = \{(n_1 - 1)a + (a+1)\}$ if $a/n \in \zz$, and $\mm'_2 = \emptyset$ if $a/n \notin \zz$,
			\smallskip
			
			\item $\mm_3 = \{j(a+n) \mid j \in \ldb 2, n_2 - 1 \rdb \}$,
			\smallskip
			
			\item $\mm_4 = \{j(a+n) +(a+n-1) \mid j \in \ldb 1, n_2 - 2 \rdb \}$, and
			\smallskip
			
			\item $\mm'_4 = \{ (n_2 - 1)(a+n) + (a+n-1) \}$ if $a/n \in \zz$, and $\mm'_4 = \emptyset$ if $a/n \notin \zz$.
		\end{itemize}
	\end{theorem}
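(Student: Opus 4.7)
My plan is to establish the two inclusions separately. For the forward inclusion, I would verify that each listed element admits a unique factorization by a direct argument. The cases $\mathcal{M}_1$ and $\mathcal{M}_3$ follow immediately from the definitions of $n_1$ and $n_2$: if $ka$ (with $2 \le k \le n_1 - 1$) had a factorization using some atom strictly larger than $a$, separating out the ``$a$-part'' would produce a positive multiple $k'a$ of $a$ inside $\langle a+j : j \in \ldb 1,n \rdb \rangle$ with $k' \le k < n_1$, contradicting the minimality of $n_1$. For $\mathcal{M}_2, \mathcal{M}'_2, \mathcal{M}_4, \mathcal{M}'_4$ I would parameterize an arbitrary factorization of $x$ by its length $\ell = \sum c_j$ and weighted sum $r = \sum j c_j$, use the identity $\ell a + r = x$ to express $r$ in terms of $\ell$, and apply the constraint $0 \le r \le n\ell$ together with the formulas in Lemma~\ref{lem:formulas for n_1 and n_2} to eliminate every length other than the obvious one. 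The divisibility case $n \mid a$ makes the relevant inequality tight, and this tightness is exactly what adds the singletons $\mathcal{M}'_2$ and $\mathcal{M}'_4$.

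For the reverse inclusion, my main tool would be a classification of \emph{rigid} factorizations: those in which no ``swap'' $(a+i) + (a+j) \to (a+i') + (a+j')$ with $i + j = i' + j'$ and $\{i,j\} \ne \{i',j'\}$ produces a distinct factorization of the same element. Since the unique factorization of a molecule cannot be swapped, it must be rigid. A case analysis on the support shows (for $n \ge 2$) that every non-atom rigid factorization has support contained in one of $\{0\}$, $\{n\}$, $\{0,1\}$, or $\{n-1,n\}$, with any multiplicity at an interior index forced to equal $1$. These four shapes correspond, respectively, to elements of the forms $c_0 a$, $c_n(a+n)$, $c_0 a + (a+1)$, and $(a+n-1) + c_n(a+n)$, i.e., to the templates defining $\mathcal{M}_1$, $\mathcal{M}_3$, $\mathcal{M}_2 \cup \mathcal{M}'_2$, and $\mathcal{M}_4 \cup \mathcal{M}'_4$. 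The reason is that if the support contains two indices differing by at least $2$, or an interior index of multiplicity at least $2$, then an explicit admissible swap is immediately available.

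Within each of the four rigid shapes it remains to determine the sharp range of the free parameter ($c_0$ or $c_n$) for which $x$ admits no factorization of a different length; this is essentially the forward computation run in reverse. For the shape $c_0 a + (a+1)$, for instance, one shows that a factorization at some length $\ell \le c_0$ exists precisely when $c_0 \ge \lceil (a+1)/n \rceil$, and Lemma~\ref{lem:formulas for n_1 and n_2} then yields the dichotomy $c_0 \le n_1 - 2$ when $n \nmid a$ versus $c_0 \le n_1 - 1$ when $n \mid a$, which is exactly what accounts for the extra element in $\mathcal{M}'_2$. I expect the main technical hurdle to be the rigidity classification itself: for every support configuration not in the list above one must exhibit an explicit admissible swap inside $\ldb 0, n \rdb$, paying particular attention to boundary supports that include $0$ or $n$. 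Once rigidity is established, the remainder of the argument reduces to careful bookkeeping with the bounds provided by Lemma~\ref{lem:formulas for n_1 and n_2}.
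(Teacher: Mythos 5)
Your proposal is sound and, if carried out, would prove the theorem for $n \ge 2$ (which you correctly flag; note the paper's own proof also only treats $n = 2$ and $n \ge 3$). The reverse inclusion you sketch is essentially the paper's argument repackaged: the paper shows by explicit exchanges that the unique factorization of a non-atom molecule has $c_j = 0$ for $j \notin \{0,1,n-1,n\}$ and that either $c_0 = c_1 = 0$ or $c_{n-1} = c_n = 0$, which is exactly your four-shape rigidity classification. One caveat there: the two swap scenarios you list (support containing indices differing by $\ge 2$; an interior index with multiplicity $\ge 2$) do not by themselves dispose of a support $\{k, k+1\}$ with $1 \le k \le n-2$ and $c_k = c_{k+1} = 1$; for that you need the further swap $(a+k)+(a+k+1) \to \bigl(a+(k-1)\bigr) + \bigl(a+(k+2)\bigr)$, which is admissible precisely because $k \ge 1$ and $k+2 \le n$. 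You flag the rigidity classification as the technical hurdle, and this is the case hiding in it. Where you genuinely depart from the paper is the forward inclusion: the paper proves $\mathcal{M}_1$ by induction on $j$ (canceling a copy of $a$ from any alternative factorization of $(j+1)a$ produces one for $ja$), reduces $\mathcal{M}_2$ to $\mathcal{M}_1$ by a local exchange, and handles $\mathcal{M}'_2$ separately by bounding the length of a putative second factorization. Your $(\ell, r)$-parametrization — a factorization of length $\ell$ and weighted index $r = \sum_j j c_j$ satisfies $\ell a + r = x$ with $0 \le r \le n\ell$, and every such pair is realizable — treats all six families with one inequality and makes the dichotomy $\lceil (a+1)/n \rceil = n_1 - 1$ versus $n_1$ (according as $n \nmid a$ or $n \mid a$) completely transparent. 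This is cleaner than the paper's case-by-case induction; the only thing to make explicit is that at the length of the obvious factorization the value of $r$ is $0$, $1$, $n\ell - 1$, or $n\ell$, each of which forces the corresponding $(c_0,\dots,c_n)$ uniquely, so ruling out other lengths does suffice.
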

	
	\begin{proof}
		The element $2a$ is clearly a molecule. In addition, if $ja$ is a molecule for some $j < n_1 - 1$, then $(j+1)a \notin \langle a + j \mid j \in \ldb 1,n \rdb \rangle$, and so any factorization of $(j+1)a$ yields a factorization of $ja$ (after canceling one copy of $a$), whence $(j+1)a$ is also a molecule. We have proved inductively that $\mm_1$ consists of molecules of $N_{a,n}$.
		\smallskip
		
		To verify that each element of $\mm_2$ is also a molecule, fix $k \in \ldb 1, n_1 - 2 \rdb$. Let $z := \sum_{i=0}^n c_i (a+i)$ be a factorization of $ka + (a+1)$, where $c_0, \dots, c_n \in \nn_0$. Then $c_j > 0$ for some $j \in \ldb 1, n \rdb$, and so
		\[
		z' := (c_{j-1} + 1)(a + (j-1)) + (c_j - 1)(a + j) + \! \! \sum_{i \in \ldb 0,n \rdb \setminus \{j-1,j\}} \! \!c_i(a + i)
		\]
		is a factorization of $(k+1)a$. Since $(k+1)a$ is a molecule (as proved in the previous paragraph), $z' = (k+1)a$ in $\mathsf{Z}(N_{a,n})$, and so the equalities $a + (j-1) = a$ and $c_{j-1} + 1 = k+1$ hold. Thus, $j=1$ and $c_0 = c_{j-1} = k$. As $c_1 \ge 1$, the equality $c_0 = k$ forces the equalities $c_1 = 1$ and $c_i = 0$ for every $i \ge 2$. Then $z = ka + (a+1)$ and, therefore, $ka + (a+1)$ must be a molecule. So $\mm_2$ also consists of molecules.
		\smallskip
		
		Let us check that the singleton $\mm'_2$ contains a molecule when $n$ divides $a$. Write $a = kn$ for some $k \in \nn_{\ge 2}$. It follows from Lemma~\ref{lem:formulas for n_1 and n_2} that $k = a/n = n_1 - 1$. Notice, on the other hand, that if the element $(n_1 - 1)a + (a+1)$ has a factorization $z$ different from the obvious one, such a factorization must have length at most $k$. As a result, $k(a+n) = (k+1)a < \pi(z) \le k(a+n)$, which is not possible. Thus, $(n_1 - 1)a + (a+1)$ must be a molecule.
		\smallskip
		
		Verifying that $\mm_3$ (resp., $\mm_4$ and $\mm'_4$) consists of molecules can be done following the same lines we just used to argue that $\mm_1$ (resp., $\mm_2$ and $\mm'_2$) consists of molecules. Therefore $\mm_1 \cup \mm_2 \cup \mm'_2 \cup \mm_3 \cup \mm_4 \cup \mm'_4$ is a subset of $\mathcal{M}(N_{a,n}) \setminus \mathcal{A}(N_{a,n})$.
		\smallskip
		
		To prove that the reverse inclusion holds, take $m \in \mathcal{M}(N_{a,n}) \setminus \mathcal{A}(N_{a,n})$ and let $z := \sum_{i=0}^n c_i (a+i)$ be the only factorization of $m$, where $c_0, \dots, c_n \in \nn_0$. Since $m$ is not an atom, $\sum_{i=0}^n c_i \ge 2$.
		
		Suppose for the sake of a contradiction that $c_j \ge 1$ for some $j \notin \{0,1,n-1,n\}$. If $c_j \ge 2$, then one could replace $2(a+j)$ in $z$ by $(a+(j-1)) + (a + (j+1))$ to obtain a factorization of $m$ different from $z$. Hence $c_j = 1$, and so there exists $c_k > 0$ for some $k \neq j$. We first assume that $k < j$. If $k = 0$, then as $a + 1 \neq a + j$ one could replace $a + (a + j)$ by $(a+1) + (a + (j-1))$ to obtain a factorization of $m$ different from $z$. On the other hand, if $k > 0$, then after replacing $(a+k) + (a+j)$ by $(a+(k-1)) + (a + (j+1))$ we would obtain again a factorization of $m$ different from $z$. As a result, $k < j$ generates contradictions. The case of $k > j$ can be handled \emph{mutatis mutandis} to generate contradictions. Thus, $c_j = 0$ when $j \notin \{0,1,n-1,n\}$. Let us split the rest of the proof into the following two cases.
		\smallskip
		
		CASE 1: $n \ge 3$. Write $z = c_0 a + c_1 (a+1) + c_{n-1}(a + (n-1)) + c_n(a+n)$. Reasoning as in the previous paragraph, one finds that either $c_0 = c_1 = 0$ or $c_{n-1} = c_n = 0$. Suppose first that $c_{n-1} = c_n = 0$, and so $z = c_0 a + c_1 (a+1)$. In this case, $c_0 \le n_1 - 1$ and $c_1 \in \{0,1\}$, as otherwise one could replace $2(a+1)$ by $a + (a+2)$ to obtain a factorization of $m$ different from $z$. If $c_1 = 0$, it is clear that $c_0a \notin \langle a + j \mid j \in \ldb 1,n \rdb \rangle$, and so $m \in \mm_1$. Then suppose that $c_1 = 1$. If $c_0 < n_1 - 1$, then $m \in \mm_2$. Accordingly, suppose that $c_0 = n_1 - 1$ and so that $m = n_1 a + 1$.
		
		Because $n_1 a \in \langle a+j \mid j \in \ldb 1,n \rdb \rangle$ we can replace $n_1 a$ by a sum of atoms in $\{a+1, \dots, a + n\}$ in $m = n_1 a + 1$. Provided that $a+n$ does not divide $n_1 a$, this will yield a factorization of $m$ different from $z$. Therefore $n_1 a = n'(a+n)$ for some $n' \in \nn$. By Lemma~\ref{lem:formulas for n_1 and n_2} and the minimality of $n_2$, one sees that $n' \ge n_2 = n_1 - 1$. This, along with $n_1 a = n'(a+n)$, guarantees that $n' = n_1 - 1$. Then $n_1 a = (n_1 - 1)(a+n)$ can be rewritten as $a = (n_1 - 1)n$. Hence $n$ divides $a$, and so $m \in \mm'_2$.
		
		The case of $c_0 = c_1 = 0$ follows analogously.
		\smallskip
		
		CASE 2: $n=2$. Write now $z = c_0a + c_1(a+1) + c_2(a+2)$. As $m$ is a molecule, $c_0 c_2 = 0$ and $c_1 \le 1$. Assume first that $c_2 = 0$. Then $z = c_0a + c_1(a+1)$, and one can check that $m \in \mm_1 \cup \mm_2 \cup \mm'_2$ as we did in CASE 1. If $c_2 \neq 0$, then $c_0 = 0$ and so $z = c_1(a+1) + c_2(a+2)$. This case can be handled similarly to the case of $z = c_0a + c_1(a+1)$ to conclude that $m \in \mm_3 \cup \mm_4 \cup \mm'_4$.
	\end{proof}
	
	\begin{cor} \label{cor:molecules of interval NMs}
		Take $a, n \in \nn$ such that $a \ge 3$ and $n \in \ldb 1, a-1 \rdb$. Then
		\[
		|\mm(N_{a,n})| = \twopf{4 \lceil a/n \rceil + n - 5}{n \nmid a}{4 \lceil a/n \rceil + n - 3}{n \mid a}.
		\]
	\end{cor}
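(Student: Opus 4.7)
The plan is to combine the explicit description of $\mathcal{M}(N_{a,n}) \setminus \mathcal{A}(N_{a,n})$ given by Theorem~\ref{thm:molecules of generalized INM} with the closed-form values of $n_1$ and $n_2$ supplied by Lemma~\ref{lem:formulas for n_1 and n_2}, and then add the number of atoms. Since $\mathcal{A}(N_{a,n}) = \{a+j \mid j \in \ldb 0,n \rdb\}$, we have $|\mathcal{A}(N_{a,n})| = n+1$, so it suffices to compute $|\mathcal{M}(N_{a,n}) \setminus \mathcal{A}(N_{a,n})|$ and add $n+1$.

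First I would verify that the six sets $\mm_1, \mm_2, \mm'_2, \mm_3, \mm_4, \mm'_4$ appearing in Theorem~\ref{thm:molecules of generalized INM} are pairwise disjoint. This is free: every element listed is a molecule, so it has a unique factorization; an element belonging to two of these sets would have two distinct factorizations (for instance, $\mm_1$-elements factor purely in copies of $a$ while $\mm_3$-elements factor purely in copies of $a+n$), a contradiction. Hence the cardinality of the union is the sum of the cardinalities.

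Next I would read off the cardinalities directly from the index ranges in Theorem~\ref{thm:molecules of generalized INM}: $|\mm_1| = n_1 - 2$, $|\mm_2| = n_1 - 2$, $|\mm_3| = n_2 - 2$, $|\mm_4| = n_2 - 2$, and $|\mm'_2| = |\mm'_4| = 1$ if $n \mid a$ while $|\mm'_2| = |\mm'_4| = 0$ otherwise. Substituting $n_1 = \lceil a/n \rceil + 1$ and $n_2 = \lceil a/n \rceil$ from Lemma~\ref{lem:formulas for n_1 and n_2} yields
\[
|\mathcal{M}(N_{a,n}) \setminus \mathcal{A}(N_{a,n})| = 2(n_1-2) + 2(n_2-2) + 2\cdot\mathbf{1}_{n\mid a} = 4\lceil a/n \rceil - 6 + 2\cdot\mathbf{1}_{n\mid a}.
\]
Adding $|\mathcal{A}(N_{a,n})| = n+1$ gives $4\lceil a/n \rceil + n - 5$ when $n \nmid a$ and $4\lceil a/n \rceil + n - 3$ when $n \mid a$, matching the claimed formula.

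There is no real obstacle; the only subtlety is the disjointness argument, which is handled by the observation above that two distinct expressions among $\mm_1,\dots,\mm'_4$ would produce two distinct factorizations of the same molecule.
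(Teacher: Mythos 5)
Your proposal is correct and is essentially the only natural route, which is presumably what the authors intended since the paper states the corollary without proof: combine the explicit decomposition of $\mathcal{M}(N_{a,n}) \setminus \mathcal{A}(N_{a,n})$ from Theorem~\ref{thm:molecules of generalized INM}, substitute $n_1 = \lceil a/n \rceil + 1$ and $n_2 = \lceil a/n \rceil$ from Lemma~\ref{lem:formulas for n_1 and n_2}, and add the $n+1$ atoms. The arithmetic checks out, and the edge cases work since $n \le a-1$ forces $\lceil a/n\rceil \ge 2$, so every cardinality $n_1-2$, $n_2-2$ is nonnegative and the empty index ranges contribute zero as expected.

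One small imprecision worth noting: your ``two distinct factorizations'' argument for disjointness applies cleanly to most pairs (e.g., $\mm_1$ versus $\mm_3$ because one uses only the atom $a$ and the other only $a+n$), but it does not literally apply to the pair $\mm_2$ versus $\mm'_2$ (or $\mm_4$ versus $\mm'_4$), since both sets use the same two atoms. For those pairs the disjointness comes directly from the non-overlapping index ranges $j \in \ldb 1, n_1-2\rdb$ versus $j = n_1-1$, so no element can lie in both. This is a cosmetic point and does not affect the count, but it is worth stating the disjointness argument as a combination of the two observations rather than a single blanket claim.
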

	
	\medskip
	
	
	\section{Molecularity of the Class of Numerical Monoids}
	\label{sec:Molecularity of NMs}
	
	In this section, we formally introduce the fundamental classes of Puiseux monoids we shall be concerned with, and then we prove Conjecture~\ref{conj:NM molecule conjecture}.
	
	\smallskip
	\subsection{Atomic Classes of Puiseux Monoids}
	\label{subsec:atomic classes of PMs}
	
	There are three classes of atomic Puiseux monoids that we will present in this subsection with the intention of later investigating the sets of molecules of their members.
	\medskip
	
	Let $\mathcal{C}_1$ denote the class of all Puiseux monoids that are FFMs but not UFMs.
	
	\begin{example}
		Let $M$ be a finitely generated Puiseux monoid. It follows from \cite[Proposition~2.7.8]{GH06} that $M$ is an FFM, and it follows from \cite[Proposition~4.3.1]{fG20} that $M$ is a UFM if and only if $M \cong (\nn_0,+)$. In particular, each nontrivial numerical monoid belongs to $\mathcal{C}_1$. 
	\end{example}
	
	The class $\mathcal{C}_1$ also contains non-finitely generated monoids.
	
	\begin{example}
		Consider the Puiseux monoid $M = \langle (3/2)^n \mid n \in \nn_0 \rangle$. Since $M$ can be generated by an increasing sequence, namely the increasing powers of $3/2$, it follows from \cite[Theorem~5.6]{fG19} that $M$ is an FFM. In addition, \cite[Proposition~4.3.1]{fG20} guarantees that $M$ is not a UFM. Since $\mathcal{A}(M) = \{ (3/2)^n \mid n \in \nn_0 \}$ by \cite[Proposition~4.3]{CGG20a}, the monoid $M$ is a non-finitely generated monoid that belongs to $\mathcal{C}_1$.
	\end{example}
	
	Let $\mathcal{C}_2$ denote the class of all Puiseux monoids that are BFMs but not FFMs. It is clear that the classes $\mathcal{C}_1$ and $\mathcal{C}_2$ are disjoint.
	
	\begin{example} \label{ex:BFMs not FFMs}
		For every $n \in \nn$ it is clear that $M = \nn_0 \cup \qq_{\ge n}$ is a Puiseux monoid. As $0$ is not a limit point of $M^\bullet$, it follows from \cite[Proposition~4.5]{fG19} that $M$ is a BFM. In addition, one can easily check that $\mathcal{A}(M) = \{1\} \cup \big( \qq \cap (n,n+1) \big)$. Now notice that for every $m \in \nn_{\ge 2}$ the equalities 
		\[
		2n+1 = (n + 1/m) + (n + 1 - 1/m)
		\]
		determine infinitely many distinct factorizations in $\mathsf{Z}_{M}(2n+1)$. Hence $M$ is not an FFM. Therefore the class $\mathcal{C}_2$ contains infinitely many Puiseux monoids.
	\end{example}
	
	Finally, we denote by $\mathcal{C}_3$ the class of all atomic Puiseux monoids that are not BFMs. Clearly, the class $\mathcal{C}_3$ is disjoint from $\mathcal{C}_1 \cup \mathcal{C}_2$.
	
	\begin{example} \label{ex:atomic not BFM}
		Let $P$ be a set containing infinitely many prime numbers, and consider the Puiseux monoid $M = \langle 1/p \mid p \in P \rangle$. It is not hard to verify that $M$ is atomic with $\mathcal{A}(M) = \{1/p \mid p \in P\}$ (see \cite[Theorem 4.5]{CGG20a}). On the other hand, $M$ is not a BFM because the fact that $p \frac{1}{p} \in \mathsf{Z}(1)$ for every $p \in P$ implies that $P \subseteq \mathsf{L}(1)$. So the class $\mathcal{C}_3$ contains infinitely many members.
	\end{example}
	\smallskip
	
	The fundamental questions we are interested in are related to the size of the set $\mathcal{M}(M) \setminus \mathcal{A}(M)$, where $M$ is a Puiseux monoid. In particular, we would like to know what are the possible sizes of the set $\mathcal{M}(M) \setminus \mathcal{A}(M)$ in the distinct classes of Puiseux monoids determined by the chain of implications~\ref{diag:Anderson diagram fragment}.
	
	\smallskip
	
	
	\subsection{A Conjecture on Molecularity}
	
	We say that a class $\mathcal{C}$ of monoids is \emph{molecular} if for every $n \in \nn_{\ge 2}$, there exists a monoid $M$ in $\mathcal{C}$ such that $|\mathcal{M}(M) \setminus \mathcal{A}(M)| = n$. Clearly, Conjecture~\ref{conj:NM molecule conjecture} can be rephrased by saying that the class consisting of all numerical monoids is molecular. We now offer a proof of this conjecture.
	
	\begin{theorem} \label{thm:molecules realization for NM}
		The class of numerical monoids is molecular.
	\end{theorem}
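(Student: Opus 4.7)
The plan is to realize every $m \in \nn_{\ge 2}$ as $|\mathcal{M}(N) \setminus \mathcal{A}(N)|$ for some numerical monoid $N$. I would split the values of $m$ according to parity and to the prime factorization of $m + 3$, using a different family of monoids in each case.

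For even $m \ge 2$, I would invoke Corollary~\ref{cor:molecules of interval NMs}. Since the interval numerical monoid $N_{a,n}$ has $n+1$ atoms, subtraction gives $|\mathcal{M}(N_{a,n}) \setminus \mathcal{A}(N_{a,n})| = 4 \lceil a/n \rceil - 6$ when $n \nmid a$ and $4 \lceil a/n \rceil - 4$ when $n \mid a$. Fixing $n = 2$ and taking $a = 2k - 1$ or $a = 2k$ realizes $4k - 6$ and $4k - 4$, respectively; letting $k$ range over $\nn_{\ge 2}$ then covers every even $m \ge 2$.

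For odd $m \ge 3$, I would first consider two-generated monoids $\langle p, q \rangle$ with $\gcd(p, q) = 1$ and $2 \le p < q$, generalizing the argument used in Example~3.2 for $N_{a,1}$. The relation lattice of $\langle p, q \rangle$ is generated by the single vector $(q, -p)$, so the factorization $ip + jq$ with $i, j \ge 0$ is unique precisely when $i < q$ and $j < p$; hence $|\mathcal{M}(\langle p, q \rangle) \setminus \mathcal{A}(\langle p, q \rangle)| = pq - 3$. This realizes every odd $m$ for which $m + 3$ admits a coprime factorization with both factors at least $2$.

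The residual odd values are exactly those with $m + 3$ a power of $2$, namely $m \in \{5, 13, 29, 61, \ldots\}$. For these I would use three-generator numerical monoids. As a base case, $\langle 4, 5, 7 \rangle$ realizes $m = 5$: its relation lattice is generated by the trades $3 \cdot 4 = 5 + 7$ and $2 \cdot 7 = 4 + 2 \cdot 5$, and a short enumeration of factorization vectors that admit no such trade yields the set of non-atom molecules $\{8, 9, 10, 11, 13\}$. The main obstacle will be exhibiting a uniform parametric three-generator family, for instance $\langle 2^{t-1}, 2^{t-1} + 1, 2^t - 1 \rangle$ or a variant, whose non-atom molecules number exactly $2^t - 3$ for every $t \ge 3$; verifying such a count would require identifying the minimal trades of the family and carefully enumerating the factorization vectors on which none of them is applicable.
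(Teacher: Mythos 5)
Your two-generator observation is correct and self-contained: for coprime $2 \le p < q$, the lattice of relations of $\langle p, q \rangle$ is generated by $(q, -p)$, so the non-atom molecules are exactly the elements $ip + jq$ with $0 \le i < q$, $0 \le j < p$, and $(i,j) \notin \{(0,0), (1,0), (0,1)\}$, giving $|\mathcal{M}(\langle p, q \rangle) \setminus \mathcal{A}(\langle p, q\rangle)| = pq - 3$. This handles every odd $m \ge 3$ whose $m+3$ is not a power of $2$, and the even case via $N_{a,2}$ is the same step the paper takes from Corollary~\ref{cor:molecules of interval NMs}.

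The residual odd case $m = 2^t - 3$ (for $t \ge 4$) is, however, a genuine gap, and the candidate family $\langle 2^{t-1}, 2^{t-1}+1, 2^t - 1 \rangle$ you float does not give the desired count. Take $t = 4$, so $N = \langle 8, 9, 15 \rangle$: its lattice of relations is generated by $(3, -1, -1)$ and $(0, 5, -3)$, and a short case analysis shows a factorization vector $(a, b, c)$ is the unique one for its image exactly when $a \le 2$, $b \le 4$, $c \le 2$, and $bc = 0$; that is $21$ vectors, hence $21 - 1 - 3 = 17$ non-atom molecules rather than $2^4 - 3 = 13$. You would therefore need a different, and fully verified, construction for this sparse residual set before the argument closes; as written it does not.

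For comparison, the paper sidesteps the residual problem by realizing all odd $m \ge 5$ uniformly with the four-generator family $N_a = \langle a, a+1, a+3, a+4 \rangle$ for $a \ge 6$, where it computes $|\mathcal{M}(N_a) \setminus \mathcal{A}(N_a)| = 2a - 2\lceil a/4 \rceil - 3$ when $a \not\equiv 1 \pmod 4$, and picks up $m = 3$ via $\langle 2, 3 \rangle$. Your two-generator lemma is a clean shortcut that covers most odd values more transparently than the paper's family, but by itself it does not close the proof.
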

	
	\begin{proof}
		Let $\mathcal{N}$ denote the class consisting of all proper numerical monoids, and for $N$ in $\mathcal{N}$ set $m(N) := |\mathcal{M}(N) \setminus \mathcal{A}(N)|$. Let us prove that $S := \{m(N) \mid N \in \mathcal{N} \} = \nn_{\ge 2}$. It is easy to see that $S \subseteq \nn_{\ge 2}$. For the reverse inclusion, fix $s \in \nn_{\ge 2}$. Observe that if we set $n = 2$ in Corollary~\ref{cor:molecules of interval NMs}, then we obtain that $|\mm(N_{a,2}) \setminus \mathcal{A}(N_{a,2})| = 2a - 4$. Therefore if $s$ is even with $s \ge 6$ we have that $|\mm(N_{s/2,2}) \setminus \mathcal{A}(N_{s/2,2})| = s-4$. As a result, $S$ contains all even numbers in $\nn_{\ge 2}$.
		
		To prove that $S$ contains all odd numbers in $\nn_{\ge 2}$, we will describe the sets of molecules of the numerical monoids $N_a := \langle a, a+1, a+3, a+4 \rangle$ for every $a \in \nn_{\ge 6}$. Fix $a \in \nn_{\ge 6}$. It is clear that $\mathcal{A}(N_a) = \{a, a+1, a+3, a+4\}$. For each $n \in \nn$, we set
		\[
		I_n := \{x \in N_a \mid n \in \mathsf{L}(x)\}.
		\]
		Notice that $I_1 = \mathcal{A}(N_a)$, and $I_j = \ldb ja, j(a+4) \rdb$ for every $j \in \nn_{\ge 2}$. 
		
		Out of the nine elements in $I_2$, only $2a+4$ has different factorizations of length~$2$.
		
		We shall prove inductively that for every $n \in \nn_{\ge 3}$ each element in the discrete interval $\ldb na+3, n(a+4) - 3 \rdb \subseteq I_n$ has at least two factorizations of length $n$ (in particular, they will fail to be molecules). The elements $3a+4, 3a+5, 3a+7, 3a+8 \in I_3$ can be written as the addition of $2a+4$ and the atoms $a,a+1,a+3,a+4$, respectively. Therefore each of them has at least two different factorizations of length $3$. In addition, one can readily verify that each of the elements $3a+3, 3a+6, 3a+9 \in I_3$ has at least two different factorizations of length~$3$. Suppose that for $n \ge 4$, each of the elements $(n-1)a+3, (n-1)a+4, \dots, (n-1)(a+4)-3 \in I_{n-1}$ has at least two different factorizations of length $n-1$.
		Adding $a$ to each of these elements, we find that each of the elements $na+3, na+4, \dots, n(a+4) - 7 \in I_n$ has at least two different factorizations of length $n$. Furthermore, $n(a+4) - 6, n(a+4) - 5 \in I_n$ can be written as the addition of the element $(n-1)(a+4) - 5$ and the atoms $a+3$ and $a+4$, respectively. Similarly, $n(a+4) - 4, n(a+4) - 3$ can be written as the addition of the element $(n-1)(a+4) - 3$ and the atoms $a+3$ and $a+4$, respectively. Hence each of the elements $n(a+4) - 6, n(a+4) - 5, n(a+4) - 4, n(a+4) - 3 \in I_n$ also has at least two different factorizations of length $n$. This concludes our inductive argument. Lastly, it is clear that for every $n \in \nn_{\ge 3}$ the smallest three and the largest three elements of the discrete interval $I_n$ each has exactly one factorization of length $n$.
		
		Now we just need to determine the elements of $N_a$ that belong to more than one~$I_j$. Notice that if $x \in I_j \cap I_k$ for some $j < k$, then $x \in I_j \cap I_{j+1}$. When this is the case, one finds that $j(a+4) = \max I_j \ge \min I_{j+1} = (j+1)a$, and so $4j \ge a$. Set
		\[
		m := \min\{j \in \nn \mid I_j \cap I_{j+1} \neq \emptyset\}.
		\]
		Since $\max I_j - \min I_j = 4j$ and $\min I_{j+1} - \min I_j = a$ for every $j \in \nn$, the three largest elements of $I_j$ will be contained in $I_{j+1}$ for every $j \ge m+1$. Similarly, the smallest three elements of $I_j$ will be contained in $I_{j-1}$ for every $j \ge m+2$.
		
		Next we find the size $m(N_a)$ of $\mm(N_a) \setminus \mathcal{A}(N_a)$ when $a \not\equiv 1 \pmod 4$. First, we assume that $m > 2$. In this case, there are $8$ molecules in $I_2$, there are $6(m-3)$ molecules in $I_3 \cup \dots \cup I_{m-1}$, there are $3 + \big(3 - (4m-a + 1)\big)$ molecules in $I_m$, and there are $\big(3 - (4m-a + 1)\big)$ molecules in $I_{m+1}$. Hence
		\[
		m(N_a) = 8 + 6(m-3) + 3 + 2(2 - (4m - a)) = 2a - 2m - 3 = 2a - 2 \big\lceil \frac{a}{4} \big\rceil -3.
		\]
		In the case of $m=2$, we have that $a \in \{6,7,8\}$ and it can be readily seen for each of such values of $a$, that the equality $m(N_a) = 2a - 2 \big\lceil \frac{a}{4} \big\rceil -3$ holds.
		
		Finally, we are ready to verify that every odd number in $\nn_{\ge 3}$ belongs to $S$. To do so, take $a = 4k - i$ for $0 \le i \le 2$ (and therefore, $k \ge 2$), and observe that 
		\begin{align*}
			1 + 2\nn_{\ge 2} &=  \{(6k+i)-8 \mid k \in \nn_{\ge 2}, \ i \in \{1,3,5\} \} \\
			&= \{ m(N_{4k-i}) \mid k \in \nn_{\ge 2}, \ i \in \{0,1,2\} \} \\
			&\subseteq \{m(N_a) \mid a \in \nn_{\ge 6}\} \subseteq S.
		\end{align*}
		To verify that $3$ also belongs to $S$, it suffices to observe that $\mm(\langle 2,3 \rangle) = \{4,5,7\}$. We conclude that $\nn_{\ge 2} \subseteq S$, and so $\{|\mathcal{M}(N) \setminus \mathcal{A}(N)| : N \in \mathcal{N} \} = \nn_{\ge 2}$.
	\end{proof}
	
	As $\mathcal{C}_1$ contains every numerical monoid, one obtains the following corollary.
	
	\begin{cor} \label{cor:the class C_1 is molecular}
		The class $\mathcal{C}_1$ is molecular.
	\end{cor}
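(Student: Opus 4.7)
The corollary is essentially an immediate consequence of Theorem~\ref{thm:molecules realization for NM} together with the observation (made right after the definition of $\mathcal{C}_1$) that every nontrivial numerical monoid lies in $\mathcal{C}_1$. My plan would be to spell this out in one short paragraph.

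First I would unpack what it means for a proper numerical monoid $N$ to belong to $\mathcal{C}_1$. Since $N$ is finitely generated, \cite[Proposition~2.7.8]{GH06} gives that $N$ is an FFM. Since $N$ is proper, its minimal set of generators has cardinality at least $2$, so by \cite[Proposition~4.3.1]{fG20} (applied exactly as in the first example of Subsection~\ref{subsec:atomic classes of PMs}) $N$ cannot be a UFM. Hence every proper numerical monoid belongs to $\mathcal{C}_1$.

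Then I would invoke Theorem~\ref{thm:molecules realization for NM}: its proof actually exhibits, for each $n \in \nn_{\ge 2}$, a proper numerical monoid $N$ with $|\mathcal{M}(N) \setminus \mathcal{A}(N)| = n$ (the witnesses used in that proof are $N_{s/2,2}$ for even $s$, the monoids $N_a = \langle a,a+1,a+3,a+4\rangle$ for odd $s \ge 5$, and $\langle 2,3 \rangle$ for $s = 3$; all of these are proper numerical monoids). Combining this with the previous paragraph yields the desired realization statement inside $\mathcal{C}_1$.

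There is essentially no obstacle: the content of the corollary is a packaging of Theorem~\ref{thm:molecules realization for NM} with the containment ``proper numerical monoids $\subseteq \mathcal{C}_1$''. The only thing to be careful about is making sure that the numerical monoids used as witnesses in the proof of Theorem~\ref{thm:molecules realization for NM} are genuinely proper (so that they are not isomorphic to $(\nn_0,+)$ and therefore are not UFMs), which is clear since each of them has embedding dimension at least $2$.
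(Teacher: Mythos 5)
Your proposal is correct and takes the same route as the paper, which simply observes that $\mathcal{C}_1$ contains every (proper) numerical monoid and then invokes Theorem~\ref{thm:molecules realization for NM}. Your extra care in noting that the witnesses produced in that theorem's proof are proper numerical monoids (hence not UFMs, hence genuinely in $\mathcal{C}_1$) is a worthwhile detail the paper glosses over slightly, but the argument is the same.
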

	
	\medskip
	
	
	\section{Molecularity of Further Classes of Puiseux Monoids}
	
	In this section, we turn our attention to classes of non-finitely generated Puiseux monoids, and study them in the same direction we studied the class of numerical monoids in Section~\ref{sec:Molecularity of NMs}.
	
	\smallskip
	
	
	\subsection{Molecularity of $\mathcal{C}_2$}
	
	We have seen in Corollary~\ref{cor:the class C_1 is molecular} that the class $\mathcal{C}_1$ is molecular. We proceed to provide a similar result for the class $\mathcal{C}_2$.
	
	\begin{theorem} \label{thm:the class C_2 is molecular}
		The class $\mathcal{C}_2$ is molecular.
	\end{theorem}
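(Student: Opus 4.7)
The plan is to construct, for each $m \in \nn_{\ge 2}$, an explicit Puiseux monoid $M_m \in \mathcal{C}_2$ with $|\mathcal{M}(M_m) \setminus \mathcal{A}(M_m)| = m$. The guiding template is Example~\ref{ex:BFMs not FFMs}, which exhibits $\nn_0 \cup \qq_{\ge n}$ as a BFM (via~\cite[Proposition~4.5]{fG19}, since $0$ is not a limit point of its positive elements) that fails to be an FFM because of the infinite family of length-$2$ factorizations $2n+1 = (n+1/k) + (n+1-1/k)$ for $k \in \nn_{\ge 2}$. The work is to refine this template so that the non-atom molecule count is exactly $m$ rather than infinite.

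The construction I would propose is a hybrid $M_m = N_m + T_m$, where $N_m$ is a numerical monoid with $|\mathcal{M}(N_m) \setminus \mathcal{A}(N_m)| = m$ furnished by Theorem~\ref{thm:molecules realization for NM}, and $T_m$ is a submonoid of $\qq_{\ge 0}$ whose atoms all lie strictly above the largest non-atom molecule of $N_m$ and which contains an infinite family of atom-pairs summing to a common target. Verification would proceed through: (i) checking closure under addition to confirm $M_m$ is a Puiseux monoid; (ii) applying~\cite[Proposition~4.5]{fG19} to deduce that $M_m$ is a BFM, since $0$ is not a limit point of its positive elements; (iii) transferring the atom-pair family of $T_m$ into an infinite family of length-$2$ factorizations of a specific element, so that $M_m$ is not an FFM; and (iv) computing $|\mathcal{M}(M_m) \setminus \mathcal{A}(M_m)| = m$.

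The crux is step (iv), which splits into two directions. The easy direction is that each of the $m$ non-atom molecules of $N_m$ remains a non-atom molecule of $M_m$: any $M_m$-factorization of such an element $x$ cannot involve an atom of $T_m$, because those atoms exceed $x$ by construction, so the unique $N_m$-factorization is the only factorization in $M_m$. Similarly, any element of $N_m$ that already has multiple $N_m$-factorizations retains them in $M_m$, so no molecule is created in that range. The main obstacle is ruling out new non-atom molecules arising from elements of $M_m$ lying outside $N_m$. A naive choice such as $T_m = \{0\} \cup \qq_{\ge L}$ fails, because it introduces an infinite family of hybrid molecules $c + q$ with $c \in N_m$ small and $q$ a single rational atom of $T_m$, each of which has a unique factorization. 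Overcoming this requires engineering $T_m$ so that every such hybrid element admits a second factorization---for instance by taking the rational atoms of $T_m$ dense in an interval wide enough to accommodate a swap $q + c' = q'_1 + q'_2$ with $c' \in N_m$ and $q'_1, q'_2$ atoms of $T_m$, or by choosing the rational atoms of $T_m$ to satisfy explicit algebraic identities that force multiplicity of factorizations for every hybrid sum. Once the correct $T_m$ is in place, every element of $M_m$ outside the designated $m$ molecules has at least two factorizations, and the count collapses to $m$, completing the proof.
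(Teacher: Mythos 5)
Your high-level plan matches the paper's: start from the numerical monoid supplied by Theorem~\ref{thm:molecules realization for NM}, embed it into a Puiseux monoid that is a BFM but not an FFM (via~\cite[Proposition~4.5]{fG19} plus an explicit infinite family of length-$2$ factorizations), and show the non-atom molecule count is preserved. But your concrete construction $M_m = N_m + T_m$, with $T_m$ a separate block of atoms sitting above $\max(\mathcal{M}(N_m) \setminus \mathcal{A}(N_m))$, has a genuine gap precisely at the step you flag as ``the crux.'' You correctly observe that a naive $T_m$ manufactures unwanted hybrid molecules $c+q$, but the repair you sketch --- choosing the atoms of $T_m$ dense in an interval wide enough to allow a swap $q + c' = q_1' + q_2'$ --- does not close it. For the smallest such hybrids (say $c \le K := \max\mathcal{M}(N_m)$), any rewrite that replaces the single $T_m$-atom $q$ by two $T_m$-atoms $q_1', q_2'$ forces $q_1' + q_2' = c + q - (c - c')$ with both $q_i' \ge L > K$, hence $c + q \ge 2L - (c-c') > 2K$, which fails for small $c$. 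And a rewrite keeping a single $T_m$-atom $q'' = q + (c - c'')$ requires shifting the fractional part of $q$ by an \emph{integer}, since $N_m \subseteq \zz$, so $q'' - q \in \zz$; for generic $c$ the needed $c'' = c \pm k$ need not lie in $N_m$. Nothing in the proposal forces these rewrites to exist for \emph{every} hybrid, so new molecules are not ruled out.

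The paper avoids this entirely by \emph{not} separating the dense rational part from $N$. Instead it thickens each atom $a \in \mathcal{A}(N)$ into a half-open interval $[a, a+\delta) \cap \qq$ with $\delta = \min\mathcal{M}(N)/\max\mathcal{M}(N) < 1$, and takes $M = \langle A \rangle$ with $A$ the union of these intervals. The width $\delta < 1$ ensures $\mathcal{A}(M) = A$ and that integers of $M$ up to $K$ can only be built from integer atoms, so their factorization behavior is inherited verbatim from $N$. Meanwhile, every non-integer non-atom $x \in M$ has a factorization containing an atom $a_1$ with positive fractional part alongside some atom $a_2$, and the mass transfer $a_1 \mapsto a_1 - 1/n$, $a_2 \mapsto a_2 + 1/n$ (for $n$ large, staying inside the half-open intervals) yields a second factorization. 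This uniformly kills every potential new molecule, with no case analysis depending on the size of the element. To turn your proposal into a proof you would either need to adopt this in-place thickening, or actually exhibit a $T_m$ and verify the swap exists for every hybrid, which the sketch does not do.
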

	
	\begin{proof}
		Fix $n \in \nn_{\ge 2}$. By Theorem~\ref{thm:molecules realization for NM}, there exists a numerical monoid $N$ satisfying $|\mathcal{M}(N) \setminus \mathcal{A}(N)| = n$. Set $a_0 := \min \mathcal{M}(N)$ and $K = \max \mathcal{M}(N)$. Therefore $\delta := a_0/K$ satisfies $0 < \delta < 1$. Now consider the Puiseux monoid $M = \langle A \rangle$, where $A = \bigcup_{a \in \mathcal{A}(N)} [a, a + \delta) \cap \qq$. As $0$ is not a limit point of $M^\bullet$, it follows from \cite[Proposition~4.5]{fG19} that $M$ is a BFM and, in particular, an atomic monoid. We proceed to verify that $\mathcal{A}(M) = A$.
		
		To do this, take $x \in M \cap \nn$ with $x \le K$, and write $x = a_1 + \dots + a_\ell$ for $\ell \in \nn$ and $a_1, \dots, a_\ell \in A$. Then suppose, by way of contradiction, that $m := \max \{a_i - \lfloor a_i \rfloor \mid i \in \ldb 1,\ell \rdb\} > 0$. In this case, the fact that $x \in \nn$ guarantees the last inequality of
		\begin{equation} \label{eq:auxiliary 1}
			\ell \delta > \ell m \ge \sum_{i=1}^\ell (a_i - \lfloor a_i \rfloor) \ge 1.
		\end{equation}
		Using~\eqref{eq:auxiliary 1} we obtain that $x \ge \ell a_0 = \ell \delta K > K$, which is a contradiction. As a consequence, $a_1, \dots, a_\ell \in N$.
		
		Now take $a \in A$ and write $a = a_1 + \dots + a_\ell$ for $\ell \in \nn$ and $a_1, \dots, a_\ell \in A$. Because $\sum_{i=1}^\ell (a_i - \lfloor a_i \rfloor ) \ge  a - \lfloor a \rfloor$, we can write $\lfloor a \rfloor = b_1 + \dots + b_\ell$, where $b_i \in [ \lfloor a_i \rfloor, a_i] \cap \qq$ for every $i \in \ldb 1, \ell \rdb$. The inclusion $\lfloor a \rfloor \in M \cap \nn_{\le K}$, along with our argument in the previous paragraph, now implies that $b_1, \dots, b_\ell \in N$. As $\lfloor a \rfloor \in \mathcal{A}(N)$, it follows that $\ell = 1$, and so $a \in \mathcal{A}(M)$. Hence $\mathcal{A}(M) = A$.
		
		Since $M$ is a BFM, proving that $M$ belongs to $\mathcal{C}_2$ amounts to verifying that $M$ is not an FFM. To do so, take $r \in (0, \delta) \cap \qq$ and set $x := 2 a_0 + r$. It is clear that $x \in M$. On the other hand, it can be readily seen that $a_0 + \big( r/2 \pm 1/n \big) \in \mathcal{A}(M)$ for every $n \in \nn$ with $n \ge 2/r$. As a result, the equalities $x = \big( a_0 + \big(\frac{r}{2} - \frac{1}{n} \big) \big) + \big( a_0 + \big(\frac{r}{2} + \frac{1}{n} \big) \big)$ (for every $n \in \nn_{\ge 2/r}$) yield infinitely many factorizations of $x$. As a consequence, $M$ is not an FFM.
		
		Next we show that no element $x \in M \setminus (\mathcal{A}(M) \cup N)$ belongs to $\mathcal{M}(M)$. Since $x \notin \mathcal{A}(M) \cup N$, there exists $z \in \mathsf{Z}(x)$ having a length-$2$ subfactorization $a_1 + a_2$ such that $r := a_1 - \lfloor a_1 \rfloor > 0$. Take $n \in \nn$ such that $1/n < \min \{r, \lfloor a_2 \rfloor - a_2 + \delta\}$, and note that $a_1 - 1/n, a_2 + 1/n \in \mathcal{A}(M)$. Then, after replacing $a_1 + a_2$ in $z$ by $\big( a_1 - \frac{1}{n} \big) + \big( a_2 + \frac{1}{n} \big)$, one obtains a factorization of $x$ different from $z$. Hence $x$ is not a molecule.
		
		The inclusion $\mathcal{A}(N) \subseteq \mathcal{A}(M)$, together with our argument in the previous paragraph, ensures that $\mathcal{M}(M) \setminus \mathcal{A}(M) \subseteq \mathcal{M}(N)$. On the other hand, if $x \in \mathcal{M}(N)$, then $x \in M \cap \nn_{\le K}$ and so each factorization of $x$ in $M$ is also a factorization of $x$ in $N$. Hence $x$ must belong to $\mathcal{M}(M)$. As a result, $\mathcal{M}(M) \setminus \mathcal{A}(M) = \mathcal{M}(N) \setminus \mathcal{A}(N)$, from which we conclude that $|\mathcal{M}(M) \setminus \mathcal{A}(M)| = n$.
	\end{proof}
	
	Unlike the case of numerical monoids, there are monoids $M$ in $\mathcal{C}_2$ satisfying $|\mathcal{M}(M) \setminus \mathcal{A}(M)| = 1$. For instance, consider for every $n \in \nn$ the monoid $\{0\} \cup \qq_{\ge n}$. It is easy to see that the only molecule of $\{0\} \cup \qq_{\ge n}$ that is not an atom is $2n$.
	\smallskip
	
	In the direction of Corollary~\ref{cor:the class C_1 is molecular} and Theorem~\ref{thm:the class C_2 is molecular}, we have the following conjecture.
	
	\begin{conjecture}
		The class $\mathcal{C}_3$ is molecular.
	\end{conjecture}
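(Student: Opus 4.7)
The plan is to mimic the strategy used in the proof of Theorem~\ref{thm:the class C_2 is molecular}: given $n \in \nn_{\ge 2}$, choose (by Theorem~\ref{thm:molecules realization for NM}) a numerical monoid $N$ with $|\mathcal{M}(N) \setminus \mathcal{A}(N)| = n$, and then build a Puiseux monoid $M$ from $\mathcal{A}(N)$ by adjoining a suitable set of small atoms. The new ingredient, required to land in $\mathcal{C}_3$, is that the small atoms must accumulate at $0$. My candidate set of small atoms is $\{C/p_i : i \ge 1\}$, where $C > K := \max \mathcal{M}(N)$ and $p_1 < p_2 < \cdots$ is a strictly increasing sequence of primes with $p_1 > C$. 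The first approximation is
\[
M_0 := \langle \mathcal{A}(N) \cup \{C/p_i : i \ge 1\} \rangle.
\]

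A fractional-part analysis modulo each $p_i$, in the spirit of Lemma~\ref{lem:formulas for n_1 and n_2} and of Theorem~\ref{thm:the class C_2 is molecular}, should yield the following properties of $M_0$. First, $\mathcal{A}(M_0) = \mathcal{A}(N) \cup \{C/p_i : i \ge 1\}$, so $M_0$ is atomic: any contribution of the small atoms to an integer sum is forced, by the congruences $c_i \equiv 0 \pmod{p_i}$, to be a non-negative integer multiple of $C$, which exceeds $K$ unless it is $0$. Second, $M_0$ is not a BFM, since $C = p_i \cdot (C/p_i)$ gives $\{p_i\}_{i \ge 1} \subseteq \mathsf{L}_{M_0}(C)$. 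Third, every $x \in \mathcal{M}(N) \setminus \mathcal{A}(N)$ is a molecule of $M_0$: because $x \le K < C$, the same congruence argument forces the $C/p_i$-contributions in any $M_0$-factorization of $x$ to vanish, so $\mathsf{Z}_{M_0}(x) = \mathsf{Z}_N(x)$ is a singleton.

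The obstacle is that $M_0$ has too many molecules: by the same fractional-part analysis, every combination $\sum c_i \, (C/p_i)$ with $c_i \in \ldb 0, p_i - 1 \rdb$ and $\sum c_i \ge 2$ is a molecule of $M_0$, so $\mathcal{M}(M_0) \setminus \mathcal{A}(M_0)$ is infinite. To destroy these spurious molecules without destroying the $n$ molecules coming from $N$, I plan to enlarge $M_0$ by adjoining further atoms so that the perturbation argument from the last paragraph of the proof of Theorem~\ref{thm:the class C_2 is molecular} applies to every candidate new molecule. A natural choice is to replace each singleton generator $C/p_i$ by a dense set $[C/p_i, C/p_i + \delta_i) \cap \qq$ with $\delta_i \downarrow 0$ chosen small enough (say $\sum_i \delta_i < a_0/K$, where $a_0 := \min \mathcal{M}(N)$) that the preceding three properties persist, and then to check that every element of $M \setminus (\mathcal{A}(M) \cup N)$ admits a perturbation $(a_1, a_2) \mapsto (a_1 - 1/m, a_2 + 1/m)$ in $M$ for some pair of atoms in its factorization and some sufficiently large $m$.

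The principal technical difficulty of the proof will be verifying this perturbation step uniformly. The interval widths $\delta_i$ must be small enough that the aggregated offset of any factorization stays below $1$ (so the integer test used for $x \le K$ remains decisive), yet for every pair of atoms $a_1, a_2$ to be perturbed at least one of the ambient intervals must accommodate a $\pm 1/m$ shift; the delicate case is when both atoms lie in the same narrow interval $[C/p_i, C/p_i + \delta_i)$, where one likely has to select the partner atom from a different interval $[C/p_j, C/p_j + \delta_j)$ with $j \ne i$. I expect that a choice of the form $\delta_i = \delta_1/2^i$ for small $\delta_1$ makes both constraints compatible and renders the perturbation argument uniform; once this is in place, the four properties above combine to give $\mathcal{M}(M) \setminus \mathcal{A}(M) = \mathcal{M}(N) \setminus \mathcal{A}(N)$, whence $|\mathcal{M}(M) \setminus \mathcal{A}(M)| = n$ and $M \in \mathcal{C}_3$.
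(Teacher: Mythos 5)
This statement is presented in the paper as an \emph{open conjecture}; the authors prove molecularity for $\mathcal{C}_1$, $\mathcal{C}_2$, and $\mathcal{C}_4$ but explicitly leave $\mathcal{C}_3$ unresolved. So there is no proof in the paper to compare against, and your attempt has to be judged on its own.

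The attempt has a fatal gap, and it lies exactly in the step you flag as ``checking that the preceding three properties persist'' after passing from the singleton generators $C/p_i$ to dense intervals. Once you replace $\{C/p_i\}$ by the dense sets $[C/p_i, C/p_i + \delta_i) \cap \qq$ accumulating at $0$, the resulting monoid $M$ is \emph{not atomic}, so it lands in $\mathcal{C}_4$ rather than $\mathcal{C}_3$, no matter how fast $\delta_i \downarrow 0$. To see this, fix $q \in \big(C/p_i, C/p_i + \delta_i\big) \cap \qq$. Since the generating intervals accumulate at $0$, there is $j$ with $C/p_j + \delta_j \le q - C/p_i$; pick any rational $q'' \in [C/p_j, C/p_j + \delta_j)$ and set $q' := q - q''$. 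Then $q' \in [C/p_i, q) \subset [C/p_i, C/p_i + \delta_i) \cap \qq$ is again a generator, so $q = q' + q''$ is a nontrivial decomposition and $q$ is not an atom. Hence $\mathcal{A}(M) \subseteq \mathcal{A}(N) \cup \{C/p_i : i \ge 1\}$, so $\langle \mathcal{A}(M) \rangle \subseteq M_0$, while $M$ properly contains $M_0$ (e.g.\ it contains $C/p_1 + 1/q$ for a prime $q \notin \{p_i\}$, which has a denominator not dividing any $\prod p_i$). Thus $M \ne \langle \mathcal{A}(M) \rangle$, i.e.\ $M$ is not atomic. This is the structural tension you sense but don't resolve: the congruence/ceiling arguments that make $M_0$ work rely on the small atoms being \emph{exactly} $C/p_i$, whereas the perturbation step that kills the spurious molecules requires the small atoms to be dense near $0$ --- and density near $0$ is precisely what destroys atomicity. (This same tension presumably explains why the statement is left as a conjecture in the paper.) A further, secondary issue is that even in $M_0$ you would need to account for spurious molecules of all magnitudes, not only those below $K$; the set $\{\sum_i c_i\, C/p_i : 0 \le c_i < p_i,\ \sum c_i \ge 2\}$ is unbounded, and the proposed perturbation must defeat all of them simultaneously.
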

	
	\smallskip
	
	
	\subsection{Molecularity of Non-Atomic Puiseux Monoids}
	
	Each of the monoids we have treated so far is atomic. However, there are plenty of non-atomic Puiseux monoids. Let~$\mathcal{C}_4$ consist of all non-atomic Puiseux monoids. In the same direction of our previous results, we have the following proposition.
	
	\begin{proposition} \label{prop:the class of non-atomic PMs is molecular}
		The class $\mathcal{C}_4$ is molecular.
	\end{proposition}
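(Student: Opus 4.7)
The plan is to realize each $n \in \nn_{\ge 2}$ as $|\mathcal{M}(M) \setminus \mathcal{A}(M)|$ for some non-atomic Puiseux monoid $M$ by grafting a harmless non-atomic tail onto a numerical monoid with $n$ non-atom molecules. Concretely, given $n \in \nn_{\ge 2}$, Theorem~\ref{thm:molecules realization for NM} supplies a numerical monoid $N$ with $|\mathcal{M}(N) \setminus \mathcal{A}(N)| = n$; set $K := \max \mathcal{M}(N)$. I would then fix a prime $p$ together with an integer $b \in \nn$ satisfying $b > K$ and $\gcd(b,p) = 1$ (for instance, a prime $b > K$ distinct from $p$), and take as candidate
\[
M := N + \langle b/p^k \mid k \in \nn \rangle \subseteq \qq_{\ge 0}.
\]
The remainder of the argument splits into three claims: (a) $\mathcal{A}(M) = \mathcal{A}(N)$, (b) $M$ is non-atomic, and (c) $\mathcal{M}(M) = \mathcal{M}(N)$. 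Together these yield $|\mathcal{M}(M) \setminus \mathcal{A}(M)| = n$, while (b) places $M$ in $\mathcal{C}_4$.

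The core of the proof is (a). I would first establish the arithmetic lemma: if $s \in \langle b/p^k \mid k \in \nn \rangle$ happens to lie in $\zz$, then $s \in b\zz$. This holds because such an $s$ equals $b\sigma$ for some $\sigma \in \zz[1/p]_{\ge 0}$, and $b\sigma \in \zz$ combined with $\gcd(b,p)=1$ forces $\sigma \in \zz$. Given a purported decomposition $a = (n_1+s_1)+(n_2+s_2)$ of an atom $a \in \mathcal{A}(N)$ in $M$ (with $n_i \in N$ and $s_i$ in the $p$-adic tail), the sum $s_1+s_2 = a - n_1 - n_2$ is a nonnegative integer, hence a nonnegative multiple of $b$ by the lemma. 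The inequality $a \le K < b$ forces this multiple to be $0$, so $s_1 = s_2 = 0$ and the decomposition reduces to one in $N$, where atomicity of $a$ in $N$ finishes the argument. For the reverse inclusion $\mathcal{A}(M) \subseteq \mathcal{A}(N)$, I would show that any element of $M$ with nonzero $p$-adic tail admits a nontrivial splitting: peeling off a single term $b/p^{k_0}$ from its support works whenever the remainder stays positive, and the residual case $x = b/p^{k_0}$ is handled by the identity $b/p^{k_0} = b/p^{k_0+1} + (p-1)\,b/p^{k_0+1}$.

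Once (a) is in hand, (b) is immediate: every atom of $M$ is a positive integer whereas $b/p \in M$ is not an integer (again by $\gcd(b,p)=1$), so $b/p$ admits no factorization in $M$. For (c), since $\mathcal{A}(M) = \mathcal{A}(N) \subseteq \nn$, any factorization in $M$ is a sum of integer atoms, whence $\mathsf{Z}_M(x) = \mathsf{Z}_N(x)$ for $x \in N$ and $\mathsf{Z}_M(x) = \emptyset$ otherwise, giving $\mathcal{M}(M) = \mathcal{M}(N)$. The main obstacle is the arithmetic step in (a), which is precisely why the size bound $b > K$ and the coprimality $\gcd(b,p) = 1$ are imposed: they prevent the added $p$-adic fractions from conspiring to produce a small positive integer and thereby spoiling the atomicity of elements of $\mathcal{A}(N)$.
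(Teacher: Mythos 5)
Your proposal takes essentially the same approach as the paper: graft a non-atomic tail generated by $\{b/p^k \mid k \in \nn\}$ (the paper uses $\{p/2^n \mid n \in \nn\}$ with $p$ a prime exceeding $\max \mathcal{M}(N)$, which is just the special case $b = p$, denominator base $2$) onto a numerical monoid $N$ realizing the target count, then show $\mathcal{A}(M) = \mathcal{A}(N)$, deduce $\mathcal{M}(M) = \mathcal{M}(N)$, and note that $M$ is non-atomic. Your argument is correct and in places more detailed than the paper's --- you supply the arithmetic lemma behind $\mathcal{A}(N) \subseteq \mathcal{A}(M)$ and exhibit $b/p$ as a concrete unfactorable element, whereas the paper invokes the fact that atoms lie in the generating set for one inclusion and the finiteness of $\mathcal{A}(M)$ together with $0$ being a limit point of $M^\bullet$ for non-atomicity.
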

	
	\begin{proof}
		Fix $n \in \nn_{\ge 2}$. By Theorem~\ref{thm:molecules realization for NM} there exists a numerical monoid $N$ such that $|\mathcal{M}(N) \setminus \mathcal{A}(N)| = n$. Take $p \in \pp$ such that $p > \max \mathcal{M}(N)$, and consider the Puiseux monoid
		\[
		M = \bigg\langle N \cup \bigg\{ \frac{p}{2^n} \ \bigg{|} \ n \in \nn \bigg\} \bigg\rangle.
		\]
		Clearly, none of the elements of $M$ of the form $p/2^n$ belongs to $\mathcal{A}(M)$. Therefore $\mathcal{A}(M) \subseteq \mathcal{A}(N)$. Furthermore, since $p > \max \mathcal{M}(N) \ge \max \mathcal{A}(N)$, one can readily verify that $\mathcal{A}(M) = \mathcal{A}(N)$. Because $\mathcal{A}(M)$ is a finite set and $0$ is a limit point of~$M^\bullet$, the Puiseux monoid $M$ cannot be atomic, that is, $M$ belongs to $\mathcal{C}_4$. Moreover, $\mathcal{A}(M) = \mathcal{A}(N)$ implies that $\mathsf{Z}_N(x) = \mathsf{Z}_M(x)$ for all $x \in N$. Hence a molecule of $N$ remains a molecule in $M$, i.e., $\mathcal{M}(N) \subseteq \mathcal{M}(M)$. On the other hand, if $x \in \mathcal{M}(M)$, then $\mathsf{Z}_M(x)$ is nonempty and, therefore, $\mathcal{A}(M) = \mathcal{A}(N)$ ensures that $x \in N$. As  $\mathsf{Z}_N(x) = \mathsf{Z}_M(x)$, it follows that $x \in \mathcal{M}(N)$. Thus, $\mathcal{M}(M) \subseteq \mathcal{M}(N)$, and so we obtain that $\mathcal{M}(M) = \mathcal{M}(N)$. This, together with the fact that $\mathcal{A}(M) = \mathcal{A}(N)$, ensures that $|\mathcal{M}(M) \setminus \mathcal{A}(M)| = n$, which concludes our proof because $n$ was taken arbitrarily in the set $\nn_{\ge 2}$.
	\end{proof}
	
	\medskip
	
	
	\section{Infinite Molecularity}
	
	This last section is devoted to explore the extreme case when the set $\mathcal{M}(M) \setminus \mathcal{A}(M)$ has infinite cardinality (as before, $M$ is taken to be a Puiseux monoid). This motivates the question as to whether one can find Puiseux monoids satisfying this property in each of the $\mathcal{C}_i$ classes introduced in previous sections. In order to address this question, the following definition is pertinent.
	
	\begin{definition}
		We say that a Puiseux monoid $M$ is \emph{infinitely molecular} provided that $|\mathcal{M}(M) \setminus \mathcal{A}(M)| = \infty$.
	\end{definition}
	
	As we shall reveal in the next propositions, each of the $\mathcal{C}_i$ classes contains an infinite subclass consisting of non-isomorphic Puiseux monoids that are infinitely molecular.
	
	\begin{proposition} \label{prop:C_1 is infinite molecular}
		There exists an infinite subclass of $\mathcal{C}_1$ consisting of infinitely molecular Puiseux monoids.
	\end{proposition}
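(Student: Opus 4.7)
The plan is to exhibit an explicit one-parameter family of pairwise non-isomorphic members of $\mathcal{C}_1$, each containing infinitely many nonatomic molecules. For every integer $q \in \nn_{\ge 2}$, set
\[
M_q := \bigg\langle \Big(\tfrac{q+1}{q}\Big)^n : n \in \nn_0 \bigg\rangle,
\]
and write $a_n := ((q+1)/q)^n$. Since $(a_n)$ is strictly increasing, $M_q$ is an FFM by \cite[Theorem~5.6]{fG19}, and \cite[Proposition~4.3]{CGG20a} yields $\mathcal{A}(M_q) = \{a_n : n \in \nn_0\}$; as this set is infinite, $M_q$ is not cyclic and hence not a UFM by \cite[Proposition~4.3.1]{fG20}. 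Pairwise non-isomorphism of the $M_q$'s is also immediate: any isomorphism of Puiseux monoids extends to an isomorphism of their Grothendieck groups (both subgroups of $\qq$) and so is multiplication by some $\alpha \in \qq_{>0}$, so $M_q \cong M_{q'}$ would force $\mathcal{A}(M_{q'}) = \alpha\,\mathcal{A}(M_q)$, making the common ratio $(q+1)/q$ of consecutive atoms equal to $(q'+1)/q'$ and hence $q = q'$.

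The heart of the argument is showing that each $M_q$ has infinitely many nonatomic molecules. The kernel congruence of $\pi \colon \mathsf{Z}(M_q) \to M_q$ is generated by the defining relations $(q+1)a_n = q a_{n+1}$, so the only elementary rewrites on a factorization $z = \sum_i c_i a_i$ are the forward swap of $(q+1)$ copies of $a_n$ for $q$ copies of $a_{n+1}$ (requiring $c_n \ge q+1$) and its reverse (requiring $c_{n+1} \ge q$). If $z$ satisfies $c_0 \le q$ and $c_i \le q-1$ for every $i \ge 1$, then no rewrite applies in either direction. Since two factorizations produce the same element of $M_q$ exactly when linked by a finite chain of such rewrites, the equivalence class of such a $z$ collapses to $\{z\}$, forcing $\pi(z) \in \mathcal{M}(M_q)$. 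The factorization $a_k + a_{k+1}$ meets these bounds for every $k \in \nn_0$ because $q \ge 2$, so the pairwise distinct elements $x_k := a_k + a_{k+1}$ all belong to $\mathcal{M}(M_q) \setminus \mathcal{A}(M_q)$, giving infinite molecularity.

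The main obstacle is justifying the criterion ``locally stuck implies alone in its equivalence class'', which in turn rests on knowing that the kernel congruence of $\pi$ on $\mathsf{Z}(M_q)$ really is generated by the displayed local relations. I would handle this by a normal-form argument: forward moves strictly decrease length and therefore terminate in a representative with all $c_i \le q$, and such a normal form can be shown to be unique (for instance by reading the identity $\pi(z)\,q^N = \sum c_i (q+1)^i q^{N-i}$ modulo $q+1$ to pin down $c_0$, and then inducting on the support). With this in hand, the bounds $c_0 \le q$ and $c_i \le q-1$ imposed by $z = a_k + a_{k+1}$ immediately give $\mathsf{Z}(x_k) = \{a_k + a_{k+1}\}$, and the proposition follows.
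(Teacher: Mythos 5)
Your proof is correct, and it follows the paper's overall template (a one-parameter family $M_r = \langle r^n \mid n \ge 0 \rangle$ of FF Puiseux monoids, FFness via \cite[Theorem~5.6]{fG19}, atoms via \cite[Proposition~4.3]{CGG20a}, non-isomorphism via the fact that monoid isomorphisms between Puiseux monoids are rational dilations), but you diverge at the one substantive step: showing that infinitely many non-atoms are molecules. The paper simply invokes \cite[Lemma~3.2]{CGG20}, which already asserts that $1 + r^n$ is a molecule of $M_r$ for every $n$; you instead prove from scratch that $a_k + a_{k+1}$ is a molecule by identifying the defining relations $(q+1)a_n = q\,a_{n+1}$ of $M_q$, showing via a length-decreasing rewriting system with unique normal forms (the mod-$(q+1)$ argument) that the kernel congruence of $\pi$ is generated by these relations, and then observing that the factorization $a_k + a_{k+1}$ is stuck under both forward and reverse rewrites, hence alone in its $\ker\pi$-class. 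This makes your version self-contained at the cost of re-deriving a presentation result that is essentially the content of the cited lemma; it also quietly restricts the family to $r = (q+1)/q$, which is fine since any infinite family suffices, though the paper's full family $r \in \qq_{>1}\setminus\nn$ is obtained at no extra cost. Your non-isomorphism argument (matching the ratio of consecutive atoms) is a slight variant of the paper's (matching $\min M_r^\bullet$), and you should cite \cite[Proposition~3.2]{fG18} explicitly rather than re-deriving the dilation fact via Grothendieck groups. The normal-form sketch is the one piece that would need to be written out carefully in a final version — in particular, the chain argument that ``$z$ admits no forward or reverse move'' implies $\mathsf Z(\pi(z)) = \{z\}$ needs the observation that a reverse move \emph{from} $z$ is exactly a forward move \emph{into} $z$, so a stuck $z$ cannot be the endpoint of any nontrivial reduction — but the idea is sound and the computation you indicate ($\pi(z)q^N \equiv (-1)^N c_0 \pmod{q+1}$ pins down $c_0 \in \ldb 0, q\rdb$, then peel and recurse) does establish uniqueness of bounded normal forms.
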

	
	\begin{proof}
		Consider for every $r \in \qq_{> 1} \setminus \nn$ the Puiseux monoid $M_r := \langle r^n \mid n \in \nn_0 \rangle$. It follows from~\cite[Proposition~4.3]{CGG20a} that $M_r$ is atomic with $\mathcal{A}(M_r) = \{r^n \mid n \in \nn_0\}$. Indeed, since $M_r$ is generated by the increasing sequence $(r^n)_{n \in \nn_0}$, it follows from \cite[Theorem~5.6]{fG19} that $M_r$ is an FFM. Notice that $M_r$ is not a UFM; for instance, $\mathsf{n}(r)1$ and $\mathsf{d}(r)r$ are two distinct factorizations in $\mathsf{Z}(\mathsf{n}(r))$. Then $M_r$ belongs to $\mathcal{C}_1$.
		
		Now it follows from \cite[Lemma~3.2]{CGG20} that, for every $n \in \nn$, the element $1 + r^n$ is a molecule of $M_r$ and, therefore, $1 + r^n \in \mathcal{M}(M_r) \setminus \mathcal{A}(M_r)$. Hence $|\mathcal{M}(M_r) \setminus \mathcal{A}(M_r)| = \infty$. Finally, take $t \in \qq_{>1} \setminus \nn$ such that $M_r$ and $M_t$ are isomorphic monoids. It follows from~\cite[Proposition~3.2]{fG18} that $M_t = q M_r$ for some $q \in \qq_{>0}$. Since multiplication by $q$ is an increasing function, it must take $1 = \min M_r^\bullet$ to $1 = \min M_t^\bullet$. Then $q = 1$, and so $t=r$. As a result, $\mathcal{C}_1$ contains infinitely many non-isomorphic infinitely molecular Puiseux monoids.
	\end{proof}
	
	We proceed to show that the class $\mathcal{C}_2$ also contains plenty of infinitely molecular Puiseux monoids.
	
	\begin{proposition} \label{prop:C_2 is infinite molecular}
		There exists an infinite subclass of $\mathcal{C}_2$ consisting of infinitely molecular Puiseux monoids.
	\end{proposition}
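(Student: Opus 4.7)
The plan is to produce, for each $r \in \qq_{>1}\setminus\nn$, a Puiseux monoid $M_r^{\ast}$ that lies in $\mathcal{C}_2$ and is infinitely molecular, and then to argue non-isomorphism for distinct values of $r$. The construction adapts the exponential-atom monoid $M_r$ of Proposition~\ref{prop:C_1 is infinite molecular} using the atom-fattening technique of Theorem~\ref{thm:the class C_2 is molecular}: for fixed $r$, choose $\delta_r \in \qq$ with $0 < \delta_r < (r-1)/2$ sufficiently small and set
\[
M_r^{\ast} := \big\langle \{r^n : n \in \nn\} \cup \big(\qq \cap [1, 1+\delta_r)\big)\big\rangle,
\]
so that the smallest atom $1 = r^0$ of $M_r$ is fattened into a dense interval while the discrete atoms $r^n$ for $n \ge 1$ are retained. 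The philosophy is that the fattened interval accounts for the non-FFM behaviour while the preserved discrete structure hosts infinitely many molecules inherited from $M_r$.

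First I would establish that $\mathcal{A}(M_r^{\ast}) = \{r^n : n \in \nn\} \cup (\qq \cap [1, 1+\delta_r))$ by a case analysis on the possible decompositions of an $r^n$: the bound $\delta_r < (r-1)/2$ ensures that the fattened interval $[1, 1+\delta_r)$ is too narrow to bridge any gap between two distinct powers of $r$, nor to combine with a smaller power to produce $r^n$. Since $\min \mathcal{A}(M_r^{\ast}) = 1 > 0$, the origin is not a limit point of $(M_r^{\ast})^{\bullet}$, so $M_r^{\ast}$ is a BFM by~\cite[Proposition~4.5]{fG19}. It fails to be an FFM because for every $\epsilon \in \qq \cap (0, \delta_r)$ the element $2+\epsilon$ admits the factorizations $\bigl(1 + \tfrac{\epsilon}{2} - \tfrac{1}{n}\bigr) + \bigl(1 + \tfrac{\epsilon}{2} + \tfrac{1}{n}\bigr)$ for all sufficiently large $n$, an infinite family. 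Non-isomorphism of $M_r^{\ast}$ and $M_s^{\ast}$ for $r \ne s$ follows from~\cite[Proposition~3.2]{fG18}: any isomorphism is scalar multiplication by some $q \in \qq_{>0}$, which must fix $\min(M_r^{\ast})^{\bullet} = \min(M_s^{\ast})^{\bullet} = 1$, forcing $q = 1$; matching the discrete atom sets above $1 + \delta_r$ then forces $r = s$.

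The main and most delicate step is exhibiting infinitely many non-atom molecules. The natural candidates are the elements $1 + r^n$, which are molecules of $M_r$ by~\cite[Lemma~3.2]{CGG20}; any alternate length-$2$ factorization of $1 + r^n$ in $M_r^{\ast}$ is ruled out by the gap estimate $r^n - r^k \ge r^{n-1}(r-1) > \delta_r$ for $k < n$, together with the non-solubility of $r^{k_1} + r^{k_2} = 1 + r^n$ with $k_i \ge 1$. The main obstacle lies in ruling out long factorizations of the form $\sum_{i=1}^{\ell}(1+\eta_i) = 1+r^n$, which exist whenever the interval $\bigl[(1+r^n)/(1+\delta_r),\, 1+r^n\bigr]$ contains an integer; overcoming this requires either tuning $\delta_r$ to exclude such integer lengths or passing to a sparser family of candidate molecules (for instance, pairs $r^{n_1}+r^{n_2}$ whose $2$-adic structure is incompatible with any sum of fattened atoms), so that the preserved molecular family remains infinite.
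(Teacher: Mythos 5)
Your construction fails at the very first step: for all sufficiently large $n$, the element $r^n$ is \emph{not} an atom of $M_r^{\ast}$, so the description of $\mathcal{A}(M_r^{\ast})$ you intend to establish is false. Write $r = a/b$ in lowest terms with $b \ge 2$; then $r^n = a^n/b^n$ is never an integer, and the open interval $\bigl(r^n/(1+\delta_r),\, r^n\bigr)$ has length $r^n\delta_r/(1+\delta_r) \to \infty$, so for $n$ large it contains an integer $\ell \ge 2$. Then $r^n/\ell \in (1, 1+\delta_r)\cap\qq$ is a generator and $r^n = \ell \cdot (r^n/\ell)$ decomposes $r^n$ into $\ell$ atoms from the fattened interval. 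Concretely, with $r = 3/2$ and $\delta_r = 1/10 < (r-1)/2$ one gets $(3/2)^5 = 243/32 = 7 \cdot (243/224)$ with $243/224 \in [1, 11/10)\cap\qq$. The bound $\delta_r < (r-1)/2$ only keeps a \emph{single} fattened atom below $r$; it does nothing to control sums of many fattened atoms. A dense rational interval of atoms near $1$ generates every sufficiently large rational, so any discrete structure you place far from $1$ is destroyed, along with the candidate molecules $1 + r^n$.

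Your proposed repairs do not close the gap. Shrinking $\delta_r$ cannot prevent $\bigl[(1+r^n)/(1+\delta_r),\, 1+r^n\bigr]$ from containing integers, since its length still tends to infinity with $n$. Passing to candidates such as $r^{n_1}+r^{n_2}$ with a ``$2$-adic obstruction'' fares no better, because $\qq \cap [1, 1+\delta_r)$ contains rationals with arbitrary denominators, so there is no $p$-adic invariant of a sum of fattened atoms to exploit. The paper avoids this dead end precisely by restricting denominators: in its construction $M_P = \bigcup_n M_n$ with $M_n = \{0\} \cup \bigl(R_n \cap \qq_{\ge n + 1/p_n}\bigr)$, the localizations $R_n$ force any element with $p_n$ dividing its denominator to be at least $n + 1/p_n$, and the factorization of the intended molecule $1 + (n + 1/p_n)$ is then pinned down by a $p_n$-adic argument. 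Some such denominator control is essential to your strategy; once you impose it, you are essentially reproducing the paper's proof rather than offering an alternative.
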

	
	\begin{proof}
		Let $P$ be an infinite set of primes, and let $(p_n)_{n \in \nn}$ be a strictly increasing sequence with underlying set $P$. For each $n \in \nn$, let $R_n$ denote the localization of the ring $\zz$ at the multiplicative monoid generated by $\{p_1, \dots, p_n\}$ and let $M_n$ be the Puiseux monoid $\{0\} \cup (R_n \cap \qq_{\ge n + 1/p_n})$. Now set $M_P = \bigcup_{n \in \nn_0} M_n$, where $M_0$ is taken to be~$\nn_0$. We can readily verify that $M_P$ is closed under addition, whence it is a Puiseux monoid. As $\min M_P^\bullet = 1$, it follows that $0$ is not a limit point of $M_P^\bullet$, and so $M_P$ is a BFM by \cite[Proposition~4.5]{fG19}. To argue that $M_P$ is not an FFM, first notice that every element in $R_n \cap [n+ 1/p_n, n+1 + 1/p_n)$ whose denominator is divisible by $p_n$ is an atom of $M_P$. Since
		\[
		\bigg( n + \frac{1}{p_n} + \frac{1}{p_n^k} \bigg) + \bigg( n + 1 + \frac{1}{p_n} - \frac{1}{p_n^k} \bigg) \in \mathsf{Z}_{M_P}\bigg(2n + 1 + \frac{2}{p_n} \bigg),
		\]
		for every $k \in \nn_{\ge 2}$, it follows that $2n + 1 + \frac{2}{p_n}$ is an element of $M_P$ with infinitely many factorizations. Therefore $M_P$ is not an FFM and, as a result, $M_P$ belongs to the class~$\mathcal{C}_2$.
		
		To prove that $M_P$ is infinitely molecular, consider the set $S := \{\frac{np_n + p_n + 1}{p_n} \mid n \in \nn\}$. Since $\frac{np_n + p_n + 1}{p_n} = 1 + \big( n + \frac{1}{p_n}\big)$, we find that $S$ is a subset of $M_P$ consisting of elements that are not atoms. Observe that none of the elements of $M_P$ that is strictly less than $n + \frac{1}{p_n}$ has a denominator divisible by $p_n$. This, along with the fact that whenever $a_1 + \dots + a_k$ is a factorization of $\frac{np_n + p_n + 1}{p_n}$ the prime $p_n$ must divide $\mathsf{d}(a_i)$ for some $i \in \ldb 1,k \rdb$, implies that the only factorization of $\frac{np_n + p_n + 1}{p_n}$ must be $1 + \big( n + \frac{1}{p_n}\big)$. As a result, $S$ is an infinite set of molecules that are not atoms, which implies that $M_P$ is an infinitely molecular Puiseux monoid. Finally, write $\pp$ as the disjoint union of countably many disjoint infinite sets, namely, $\pp = \bigcup_{n \in \nn} P_n$. It follows immediately from~\cite[Proposition~3.2]{fG18} that the monoid $M_{P_n}$ and $M_{P_m}$ are not isomorphic when $m \neq n$. Hence $\mathcal{C}_2$ contains an infinite subclass of non-isomorphic infinitely molecular Puiseux monoids.
	\end{proof}
	
	The class $\mathcal{C}_3$ also contains infinitely many non-isomorphic Puiseux monoids that are infinitely molecular. To argue this, we use a subfamily of the monoids introduced in Example~\ref{ex:atomic not BFM}.
	
	\begin{proposition}
		There exists an infinite subclass of $\mathcal{C}_3$ consisting of infinitely molecular Puiseux monoids.
	\end{proposition}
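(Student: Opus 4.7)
The plan is to exhibit an infinite family $\{M_P\}$ of non-isomorphic monoids in $\mathcal{C}_3$ that are infinitely molecular, indexed by infinite subsets $P \subseteq \pp$. For each such $P$ I take $M_P := \langle 1/p \mid p \in P \rangle$; Example~\ref{ex:atomic not BFM} already shows $M_P \in \mathcal{C}_3$ with $\mathcal{A}(M_P) = \{1/p \mid p \in P\}$. The key claim is that for every pair of distinct $p, q \in P$ the element $1/p + 1/q$ is a molecule; since $P$ is infinite, this produces infinitely many elements of $\mathcal{M}(M_P) \setminus \mathcal{A}(M_P)$.

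To verify the claim, suppose $1/p + 1/q = \sum_{r \in P} n_r (1/r)$ with only finitely many nonzero $n_r \in \nn_0$. I would first argue that $n_s = 0$ for every $s \in P \setminus \{p,q\}$ by applying the $s$-adic valuation $\mathsf{v}_s$: on the left, $\mathsf{v}_s(1/p + 1/q) = \mathsf{v}_s(p+q) \ge 0$; on the right, $\mathsf{v}_s(n_r/r) \ge 0$ for every $r \ne s$ in $P$, while $\mathsf{v}_s(n_s/s) = \mathsf{v}_s(n_s) - 1$. If $s \nmid n_s$ the non-Archimedean property would force the sum to have $\mathsf{v}_s = -1$, a contradiction; hence $s \mid n_s$, so $n_s/s \ge 1$, pushing the right-hand side to at least $1$, against $1/p + 1/q \le 5/6 < 1$. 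The problem thus reduces to $n_p q + n_q p = p + q$ with $\gcd(p,q) = 1$ and $n_p, n_q \in \nn_0$, whose only nonnegative solution is $(n_p, n_q) = (1,1)$.

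For the non-isomorphism requirement, I would partition $\pp$ into countably many pairwise disjoint infinite subsets $\pp = \bigsqcup_{n \in \nn} P_n$ and take the family $\{M_{P_n}\}_{n \in \nn}$. By~\cite[Proposition~3.2]{fG18}, any isomorphism $M_{P_i} \cong M_{P_j}$ is induced by multiplication by a positive rational $q$ that must send $\{1/p \mid p \in P_i\}$ bijectively onto $\{1/p \mid p \in P_j\}$; writing $q = a/b$ in lowest terms, the requirement that $a/(bp)$ be the reciprocal of a prime for each of the infinitely many $p \in P_i$ forces $a = b = 1$, whence $P_i = P_j$. The principal obstacle is the valuation argument for the molecule claim; once it is in place, the infinitely molecular, non-isomorphic, non-BFM members of the family $\{M_{P_n}\}_{n \in \nn}$ give the desired infinite subclass of $\mathcal{C}_3$.
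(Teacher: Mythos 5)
Your proof is correct, and it follows the same overall framework as the paper: take $M_P = \langle 1/p \mid p \in P \rangle$ for an infinite prime set $P$ (already shown to lie in $\mathcal{C}_3$), exhibit infinitely many non-atom molecules, and separate isomorphism classes via \cite[Proposition~3.2]{fG18} applied to a partition of $\pp$ into infinitely many infinite pieces. Where you diverge is in the choice and justification of the molecules: the paper restricts to \emph{odd} primes and cites \cite[Proposition~4.10]{GG20} to assert that $2/p$ (i.e., $1/p + 1/p$) is a molecule for each $p \in P$, whereas you take $1/p + 1/q$ for distinct $p,q \in P$ and give a self-contained $s$-adic valuation argument: for $s \in P \setminus \{p,q\}$ the ultrametric inequality forces $s \mid n_s$, and then the size bound $1/p + 1/q \le 5/6 < 1$ forces $n_s = 0$; the residual equation $n_p q + n_q p = p + q$ is then pinned down by reduction modulo $p$ and modulo $q$. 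Your route avoids the external citation and the odd-primes hypothesis, and the valuation argument is exactly the kind of reasoning one wants to see spelled out; the paper's version is shorter because it leans on a prior result. Both are valid, and the counts of non-atom molecules ($\{2/p\}$ versus $\{1/p + 1/q\}$) are equally good. One small caution: your final non-isomorphism remark (``forces $a=b=1$'') is compressed, but the intended argument -- that $q/p$ must be $1/p'$ for a prime $p'$, and a fixed $q = a/b$ in lowest terms can satisfy this for at most one $p$ unless $q = 1$ -- is sound and matches what the paper invokes implicitly.
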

	
	\begin{proof}
		Let $P$ be an infinite set of odd primes, and consider the Puiseux monoid $M_P = \langle 1/p \mid p \in P \rangle$ introduced in Example~\ref{ex:atomic not BFM}. We have already seen that $M_P$ belongs to the class $\mathcal{C}_3$. In addition, it follows from \cite[Proposition~4.10]{GG20} that $2/p$ is a molecule of $M_P$ for every $p \in P$ and, therefore, $\{2/p \mid p \in P \}$ is an infinite set of molecules of $M_P$ that are not atoms. Hence $M_P$ is infinitely molecular. Mimicking our argument in the proof of Proposition~\ref{prop:C_2 is infinite molecular}, we can argue that the construction used in this proof yields infinitely many non-isomorphic infinitely molecular Puiseux monoids in $\mathcal{C}_3$.
	\end{proof}
	
	For the sake of completeness let us show that the class $\mathcal{C}_4$ also contains infinitely many Puiseux monoids that are infinitely molecular.
	
	\begin{proposition}
		There exists an infinite subclass of $\mathcal{C}_4$ consisting of infinitely molecular Puiseux monoids.
	\end{proposition}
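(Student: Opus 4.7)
The plan is to adapt the construction of Proposition~\ref{prop:the class of non-atomic PMs is molecular}, but with the ``base'' taken to be $\nn_0$ itself (a UFM whose set of molecules $\nn$ is already infinite) rather than a proper numerical monoid. Concretely, for each odd prime $p$ I would set
\[
M_p := \bigg\langle \{1\} \cup \bigg\{ \frac{p}{2^n} \ \bigg{|} \ n \in \nn \bigg\} \bigg\rangle,
\]
and then argue that the family $\{M_p : p \in \pp \setminus \{2\}\}$ is an infinite class of pairwise non-isomorphic infinitely molecular Puiseux monoids in $\mathcal{C}_4$.

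The crucial step is to show $\mathcal{A}(M_p) = \{1\}$. Each element $p/2^n$ fails to be an atom because $p/2^n = 2 \cdot p/2^{n+1}$. That $1$ is an atom follows by a parity argument in the spirit of the proof of Proposition~\ref{prop:the class of non-atomic PMs is molecular}: any equation $1 = x + y$ with $x, y \in M_p \setminus \{0\}$ forces $x, y \in (0,1)$, so every representation of $x$ and $y$ has vanishing integer part and takes the form $pk/2^N$ with $k, N \in \nn$; summing yields $pK = 2^N$ for some $K \in \nn$, contradicting the oddness of $p$. Any other element of $M_p$ can be shown to split nontrivially by peeling off a $1$ (when the integer part is positive) or by refining a single $p/2^n$ into $2 \cdot p/2^{n+1}$.

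Once $\mathcal{A}(M_p) = \{1\}$ is in hand, the rest is immediate. Since $p/2 \in M_p \setminus \nn_0 = M_p \setminus \langle \mathcal{A}(M_p) \rangle$, the monoid $M_p$ is non-atomic and thus belongs to $\mathcal{C}_4$. Moreover, every factorization in $\mathsf{Z}(M_p)$ uses only the atom $1$, so the elements admitting factorizations are exactly the positive integers, each with the unique factorization $k \cdot 1$; hence $\mathcal{M}(M_p) \setminus \mathcal{A}(M_p) = \nn_{\ge 2}$, which is infinite.

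Finally, to see that distinct odd primes yield non-isomorphic monoids, I would invoke \cite[Proposition~3.2]{fG18}: any isomorphism $M_p \cong M_q$ must be realized by multiplication by some $c \in \qq_{>0}$, and matching the singleton atom sets forces $c = 1$, whence $M_p = M_q$. But for $p < q$, the element $p/2 \in M_p$ cannot belong to $M_q$: any representation $p/2 = a + qk/2^N$ with $a \in \nn_0$ and $k, N \in \nn$ yields $2^{N-1}(p - 2a) = qk$, impossible because $0 < p - 2a < q$ and $q$ is an odd prime. The main technical step is the atom computation; the non-atomicity, the molecular count, and the non-isomorphism all follow quickly from it.
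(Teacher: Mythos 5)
Your proof is correct, and it takes a genuinely simpler route than the paper. The paper's construction is $M_p := \langle M_{2/p} \cup \{3/2^n \mid n \in \nn\}\rangle$ with $M_{2/p} = \langle (2/p)^n \mid n \in \nn_0\rangle$, so the ``base'' monoid is an FFM with infinitely many atoms; the authors then need to verify $\mathcal{A}(M_p) = \mathcal{A}(M_{2/p})$ and invoke a lemma from \cite{CGG20} to certify that each $1 + (2/p)^n$ is a molecule. You instead collapse the base to $\nn_0$, so that $\mathcal{A}(M_p) = \{1\}$ by a short parity/valuation argument (any nonzero $x < 1$ in your $M_p$ has the form $pK/2^N$ with $K \ge 1$, and $x+y=1$ would force $p \mid 2^M$). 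Once the atom set is a singleton, non-atomicity, the computation $\mathcal{M}(M_p) \setminus \mathcal{A}(M_p) = \nn_{\ge 2}$, and infinite molecularity all fall out with no further citations. Both proofs use \cite[Proposition~3.2]{fG18} for pairwise non-isomorphism, but your membership test ($p/2 \notin M_q$ via $2^{N-1}(p-2a) = qk$ being impossible for $0 < p - 2a < q$) is again more elementary than matching the two largest atoms. The trade-off is that the paper's examples have richer atom structure and illustrate the interaction with the geometric monoids $M_{2/p}$, whereas yours are the minimal examples for the statement at hand; for the purpose of establishing molecularity of $\mathcal{C}_4$, your argument is cleaner and self-contained.
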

	
	\begin{proof}
		Fix a prime $p$ such that $p \ge 5$, and then consider the Puiseux monoid $M_p := \big\langle M_{2/p} \cup \big\{ \frac{3}{2^n} \ \big{|} \ n \in \nn \big\} \big\rangle$, where $M_{2/p}$ is the Puiseux monoid $\langle (2/p)^n \mid n \in \nn_0 \rangle$. Clearly, $M_p$ is not atomic; indeed one can readily check that $3/2$ cannot be written as a sum of atoms. Hence $M_p$ belongs to the class $\mathcal{C}_4$.
		
		It is not hard to argue that $\mathcal{A}(M_p) = \mathcal{A}(M_{2/p})$. In addition, for every $n \in \nn$, \cite[Lemma~3.1]{CGG20} guarantees that the element $1 + (2/p)^n$ is a molecule of $M_{2/p}$ that is not an atom. This, together with the fact that $\mathcal{A}(M_p) = \mathcal{A}(M_{2/p})$, implies that $1 + (2/p)^n$ is a molecule of $M_p$ that is not an atom for every $n \in \nn$. As a consequence, $|\mathcal{M}(M_p) \setminus \mathcal{A}(M_p)| = \infty$. Finally, suppose that $M_p$ is isomorphic to $M_q$ for some $q \in \pp_{\ge 5}$. By \cite[Proposition~3.2]{fG18}, there exists $r \in \qq_{>0}$ such that $M_q = rM_p$. Since multiplication by $r$ is increasing, it must send $1 = \max \mathcal{A}(M_p)$ to $1 = \max \mathcal{A}(M_q)$ and it must send $2/p$ to $2/q$. Thus, $r = 1$, which implies that $q=p$. Hence $\mathcal{C}_4$ contains infinitely many non-isomorphic infinitely molecular Puiseux monoids.
	\end{proof}
	
	\bigskip
	
	
	\section*{Acknowledgments}
	
	The authors would like to thank Felix Gotti for many valuable suggestions during the preparation of this paper. The authors are also grateful to an anonymous referee whose feedback helped improve the final version of this paper.
	
	\bigskip
	

\end{document}